\patchcmd{\thebibliography}{\chapter*}{\section*}{}{}
\newtheorem{theorem}{Theorem}[section]
\newtheorem{lemma}[theorem]{Lemma}
\newtheorem{proposition}[theorem]{Proposition}
\newtheorem{corollary}[theorem]{Corollary}\newtheorem{cor}[theorem]{Corollary}
\theoremstyle{definition}
\newtheorem{definition}[theorem]{Definition}
\theoremstyle{remark}
\newtheorem{remark}[theorem]{Remark}
\newcommand{\e}{{\varepsilon}}
\newcommand{\Z}{\mathbb{Z}}
\newcommand{\N}{\mathbb{N}}
\newcommand{\C}{\mathbb{C}}
\newcommand{\R}{\mathbb{R}}
\renewcommand{\k}{\kappa}
\newcommand{\ra}{\rightarrow}
\DeclareMathAlphabet{\mathpzc}{OT1}{pzc}{m}{it}
\newcommand{\supp}{\operatorname{supp}}
\newcommand{\op}{\operatorname}
\renewcommand{\P}{\mathcal{P}}
\newcommand{\Cal}{\mathcal}
\newcommand{\norm}[1]{\lVert #1 \rVert}
\numberwithin{equation}{section}
\newcommand{\inv}{^{-1}}
\DeclareMathAlphabet{\mathpzc}{OT1}{pzc}{m}{it}
\title{Statistics of multipliers for hyperbolic rational maps}								
\author{Richard Sharp and Anastasios Stylianou}						
\address{Mathematics Institute,
Zeeman Building,
University of Warwick,
Coventry CV4 7AL}
\email{R.J.Sharp@warwick.ac.uk}
\address{Mathematics Institute,
Zeeman Building,
University of Warwick,
Coventry CV4 7AL}
\email{Tasos.Stylianou@warwick.ac.uk}
\begin{document}
\maketitle
\begin{abstract} In this article, we consider a counting problem for orbits of hyperbolic rational maps on the Riemann sphere,
where constraints are placed on the multipliers of orbits.
Using arguments from work of Dolgopyat, we 
consider varying and potentially shrinking intervals, and
obtain a result which resembles a local central limit theorem for the logarithm of the absolute value of the multiplier and an
equidistribution theorem for the holonomies.
\end{abstract}
\section{Introduction}
A major theme in the theory of dynamical systems is the study of the distribution of periodic orbits.
This is particularly well-developed for hyperbolic systems, where one finds precise asymptotics and equidistribution results. Here, equidistribution can refer to spatial results, where averages
of orbital measures converge to a prescribed limiting measure, or to equidistribution with respect to
some symmetry of the system. This paper can be seen in the context of the second setting. 
A major advance in this theory came from the work of Dolgopyat \cite{dol}, which introduced an 
approach to obtaining more precise results.

Let us now be more precise about our setting. Let $f : J \to J$ be a hyperbolic rational map 
restricted to its Julia set and $0<\delta<2$ to be the Hausdorff dimension of $J$.
(See the next section for formal definitions.) A periodic orbit
$\tau = \left\{z,f(z),\ldots,f^{n-1}(z)\right\}$ (with $f^n(z)=z$) is called {\it primitive} if $f^m(z) \ne z$ for all
$1 \le m <n$. We denote the set of primitive periodic orbits by $\mathcal P$.
For each
$\tau = \{z,f(z),\ldots,f^{n-1}(z)\} \in \mathcal P$, we define its {\it multiplier} 
\[
\lambda(\tau) := (f^n)'(z)
\]
and its {\it holonomy} 
\[
\hat{\lambda}(\tau) : = \frac{\lambda(\tau)}{|\lambda(\tau)|}
\in \mathbb S^1,
\]
where $\mathbb S^1$ denotes the unit circle in $\mathbb C$.
A beautiful recent result of Oh and Winter \cite{hodw} states that, apart from a small set of
completely classified exceptional cases,
there exists $\varepsilon > 0$ such that 
\begin{equation}\label{ow_pot}
\# \{\tau\in \P: | \lambda(\tau) | <t\}  =  \op{Li}(t^{\delta})+O(t^{\delta -\e})
\end{equation}
and, for any $\psi\in C^4(\mathbb S^1)$,
\begin{equation*} \sum_{\tau\in \P : |\lambda(\tau) | <t} \psi \big(\hat \lambda(\tau)\big) 
=  \left(\int_{0}^{1} \psi (e^{2\pi i \theta})\; d\theta\right) \op{Li}(t^{\delta}) +O(t^{\delta -\e}),
\end{equation*}
as $t \to \infty$.
Here, $\mathrm{Li}$ denotes the logarithmic integral $\mathrm{Li}(x) = \int_2^x (\log u)^{-1} \, du 
\sim x/\log x$, as $x \to \infty$ and we write $f(x)=O(g(x))$ as $x\rightarrow\infty$ whenever there exists $C>0$ and $x_0\in\R$ such that for all $x\geq x_0$ we have that $|f(x)|\leq Cg(x)$. We also write $f(x)\sim g(x)$ as $x\rightarrow\infty$ whenever $\lim_{x\rightarrow\infty}{f(x)}/{g(x)}=1$.

In this paper, we take a slightly different viewpoint.
Instead of counting $\tau = \{z,f(z),\ldots,f^{n-1}(z)\}$ according to the modulus of its multiplier 
$|\lambda(\tau)|$, we count by the period $|\tau|=n$ but impose constraints on 
deviations of $\log |\lambda(\tau)|$
from the period and on the holonomy. More precisely, for $\alpha \in \mathbb R$, an interval
$I \subset \mathbb R$ and an arc $S \subset \mathbb S^1$, 
and writing $\mathcal P_n = \{\tau \in \mathcal P \hbox{ : } |\tau|=n\}$,
we aim to study the behaviour of
\[
\pi(n,\alpha,I,S) :=\#\{\tau \in \mathcal P_n \hbox{ : } \log |\lambda(\tau)| - n\alpha \in I \mbox{ and } \hat{\lambda}(\tau)
\in S\},
\]
as $n \to \infty$. We need to impose a restriction on $\alpha$ and, to do this, define the closed interval
\[
\mathcal I_f := \left\{\int \log |f'| \, d\mu \hbox{ : } \mu \in \mathcal M_f\right\},
\]
where $\mathcal M_f$ is the set of $f$-invariant probability measures on $J$. We also asume that the Julia set of $f$ is not contained in a circle in $\widehat\C$ since otherwise all holonomies are real. We write $\ell$ for Lebesgue measure on $\mathbb R$ and $\nu$ for the normalised Haar measure on 
$\mathbb S^1$.

\begin{theorem}
Let $f : J \rightarrow J$ be a hyperbolic rational map of degree $d \ge 2$ restricted to its Julia set
such that $J$ is not contained in a circle in $\widehat \C$. Then, 
for $\alpha \in \mathrm{int}(\mathcal I_f)$, there exists $\sigma_\alpha>0$ and 
$\xi_\alpha \in \mathbb R$ such that
\begin{align*}
\pi(n,\alpha,I,S) \sim \frac{\nu(S)}{\sigma_\alpha \sqrt{2\pi}}
\int_I e^{-\xi_\alpha x} \, dx \; \frac{e^{H(\alpha)n}}{n^{3/2}}, 
\qquad \text{ as $n \to \infty$},
\end{align*}
where
\[
H(\alpha) = \sup\left\{h_f(\mu) \hbox{ : } \mu \in \mathcal M_f \text{ and } \int \log |f'| \, d\mu =\alpha\right\}.
\]
In particular, if $\alpha=\int \log|f'|\,d\mu_{\max}$, where $\mu_{\max}$ is the measure of maximal entropy then
\begin{align*}
    \pi(n,\alpha,I,S)\sim\frac{\nu(S)\ell(I)}{\sigma_\alpha\sqrt{2\pi}} 
    \frac{d^n}{n^{3/2}} \,\,\,\,\,\text{  as $n\rightarrow\infty$.    }
\end{align*}
\end{theorem}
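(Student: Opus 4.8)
\emph{Step 1 (symbolic model).} The plan is to pass to a symbolic model, encode $\pi(n,\alpha,I,S)$ through a family of twisted transfer operators, and extract the asymptotics by a Laplace/Gaussian argument whose error terms are controlled by Dolgopyat-type spectral bounds. Hyperbolicity makes $f|_J$ uniformly expanding, so it carries a finite Markov partition and hence a H\"older surjection $\pi\colon\Sigma_A^+\to J$ from a mixing one-sided subshift of finite type $(\Sigma_A^+,\sigma)$ with $f\circ\pi=\pi\circ\sigma$, bijective off a dynamically negligible set and inducing a bijection of periodic points. Put $r=\log|f'\circ\pi|$ and $\theta=(f'\circ\pi)/|f'\circ\pi|$, both H\"older, and write $r^n=\sum_{k=0}^{n-1}r\circ\sigma^k$, $\theta^{(n)}=\prod_{k=0}^{n-1}\theta\circ\sigma^k$. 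A primitive $\tau\in\mathcal P_n$ corresponds to $n$ points $y\in\mathrm{Fix}(\sigma^n)$ with $\log|\lambda(\tau)|=r^n(y)$ and $\hat\lambda(\tau)=\theta^{(n)}(y)$, so by standard properties of such codings
\[
n\,\pi(n,\alpha,I,S)=\sum_{\sigma^n y=y}\mathbf 1_I\big(r^n(y)-n\alpha\big)\,\mathbf 1_S\big(\theta^{(n)}(y)\big)+o\big(e^{H(\alpha)n}/n^{1/2}\big),
\]
the error absorbing non-primitive orbits (bounded via the large-deviations estimate $\#\{y\in\mathrm{Fix}(\sigma^k):|r^k(y)-k\alpha|=O(1)\}\le e^{(H(\alpha)+\e)k}$ together with $H(\alpha)>0$ on $\mathrm{int}(\mathcal I_f)$) and the partition boundary. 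Let $P(\cdot)$ denote topological pressure and $\mu_{\beta r}$ the equilibrium state of $\beta r$; since $\beta\mapsto P(\beta r)$ is real-analytic and strictly convex (the latter being equivalent to $r$ not being cohomologous to a constant, which holds because $J$ is not contained in a circle in $\widehat\C$) there is a unique $\xi_\alpha\in\R$ with $\int r\,d\mu_{\xi_\alpha r}=\alpha$; set $\sigma_\alpha^2=\frac{d^2}{d\beta^2}P(\beta r)|_{\beta=\xi_\alpha}>0$, the asymptotic variance of $r$ for $\mu_{\xi_\alpha r}$, and note $H(\alpha)=P(\xi_\alpha r)-\xi_\alpha\alpha$ by the variational principle. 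I would prove the asymptotic first with $\mathbf 1_I,\mathbf 1_S$ replaced by smooth compactly supported $w\colon\R\to\R$ and smooth $\psi\colon\mathbb S^1\to\R$, then recover the stated case by squeezing $\mathbf 1_I$ and $\mathbf 1_S$ between such functions (the boundary arcs being null).

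\emph{Step 2 (the main term).} For $s\in\C$, $m\in\Z$ let $\mathcal L_{s,m}u(x)=\sum_{\sigma y=x}e^{sr(y)}\theta(y)^m u(y)$ on a suitable space of H\"older functions, so that $\sum_{\sigma^n y=y}e^{sr^n(y)}\theta^{(n)}(y)^m=\mathrm{tr}(\mathcal L_{s,m}^n)$. Writing $w(r^n-n\alpha)=e^{-n\xi_\alpha\alpha}e^{\xi_\alpha r^n}g(r^n-n\alpha)$ with $g(v)=e^{-\xi_\alpha v}w(v)$, inverting the Fourier transform $\widehat g$ of $g$, and expanding $\psi=\sum_m c_m(\,\cdot\,)^m$ with $c_0=\int\psi\,d\nu$, the symbolic sum becomes
\[
e^{-n\xi_\alpha\alpha}\sum_{m\in\Z}c_m\,\frac{1}{2\pi}\int_{\R}\widehat g(t)\,e^{-itn\alpha}\,\mathrm{tr}\big(\mathcal L_{\xi_\alpha+it,\,m}^{\,n}\big)\,dt.
\]
Near $(t,m)=(0,0)$ one has $\mathrm{tr}(\mathcal L_{\xi_\alpha+it,0}^{\,n})=e^{nP((\xi_\alpha+it)r)}(1+O(\vartheta^n))$ with $\vartheta<1$ and $P((\xi_\alpha+it)r)=P(\xi_\alpha r)+it\alpha-\tfrac12\sigma_\alpha^2 t^2+O(|t|^3)$, so the phases $e^{\pm itn\alpha}$ cancel; a Laplace estimate (substituting $t=s/\sqrt n$) makes this contribution asymptotic to $\frac{c_0\widehat g(0)}{\sigma_\alpha\sqrt{2\pi}}\,e^{nP(\xi_\alpha r)}/\sqrt n$. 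Using $\widehat g(0)=\int w(v)e^{-\xi_\alpha v}\,dv$ and $P(\xi_\alpha r)=H(\alpha)+\xi_\alpha\alpha$, multiplying by the outer $e^{-n\xi_\alpha\alpha}$, dividing by $n$, and letting $w\to\mathbf 1_I$, $\psi\to\mathbf 1_S$ produces exactly $\frac{\nu(S)}{\sigma_\alpha\sqrt{2\pi}}\int_I e^{-\xi_\alpha x}\,dx\cdot e^{H(\alpha)n}/n^{3/2}$, the extra factor $n^{-1}$ coming from the passage from periodic points to primitive periodic orbits.

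\emph{Step 3 (the main obstacle).} Everything hinges on showing the remaining contributions are $o(e^{nP(\xi_\alpha r)}/\sqrt n)$, for which one needs uniform bounds $\|\mathcal L_{\xi_\alpha+it,\,m}^{\,n}\|\le C(1+|t|)^{a}e^{n(P(\xi_\alpha r)-\e_0)}$, some $\e_0>0$, for $(t,m)$ outside a fixed neighbourhood of $(0,0)$. For $|t|$ large (uniformly in $m$) this is a Dolgopyat-type estimate exploiting that $r$ is not lattice; for $|m|$ large (uniformly in $t$) it is the analogue in the holonomy variable, which requires $\theta$ to be aperiodic — and this is precisely where the hypothesis that $J$ lies in no circle enters, to verify the non-local-integrability/non-joint-cohomology condition driving Dolgopyat's iterative cancellation scheme; on the remaining bounded region (with $m\neq0$ or $t\neq0$) one only needs $\mathcal L_{\xi_\alpha+it,m}$ to have spectral radius strictly below $e^{P(\xi_\alpha r)}$, again by (a)periodicity. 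Granting these, the $|t|$-tail is absorbed because $\widehat g$ is Schwartz, and $\sum_{m\neq0}$ is absorbed because $\psi$ is smooth (so $c_m$ decays rapidly) and the bounds are uniform in $m$; in particular only $c_0=\int\psi\,d\nu$ survives, which is why the holonomy constraint yields the factor $\nu(S)$ and no further power of $n$. Setting up the correct Banach spaces, realising the periodic-orbit sum as a (flat) trace, and proving these uniform operator bounds is the technical heart of the argument, and the step I expect to be hardest.

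\emph{Step 4 (the maximal-measure case).} If $\alpha=\int\log|f'|\,d\mu_{\max}$ then the equilibrium state of the zero potential is $\mu_{\max}$ and has $r$-average $\alpha$, so $\xi_\alpha=0$ by uniqueness; hence $\int_I e^{-\xi_\alpha x}\,dx=\ell(I)$ and $H(\alpha)=h_f(\mu_{\max})=\log d$. Moreover $\alpha\in\mathrm{int}(\mathcal I_f)$: otherwise $\beta\mapsto\int r\,d\mu_{\beta r}$ would be constant on a half-line and hence, by analyticity, everywhere, forcing $\sigma_\alpha^2=0$ and $r$ cohomologous to a constant, contradicting that $J$ lies in no circle. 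Substituting into the theorem gives $\pi(n,\alpha,I,S)\sim\frac{\nu(S)\ell(I)}{\sigma_\alpha\sqrt{2\pi}}\,d^n/n^{3/2}$.
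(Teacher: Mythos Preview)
Your proposal is correct and follows essentially the same route as the paper: pass from primitive orbits to fixed points of $f^n$, smooth the indicators, expand via Fourier transform in the $\log|\lambda|$-variable and Fourier series in the holonomy variable to reduce to the sums $Z_n(\xi_\alpha+it,m)=\sum_{f^n x=x}e^{(\xi_\alpha+it)r^n(x)+im\theta^n(x)}$, extract the main term from a quadratic expansion of $P((\xi_\alpha+it)r)$ near $t=0$ (with $m=0$) via Laplace, and kill everything else using Dolgopyat-type spectral bounds for $\mathcal L_{\xi_\alpha+it,m}$ when $(t,m)\neq(0,0)$. The only differences are cosmetic: the paper works directly with transfer operators on $C^1$ of a neighbourhood of the Markov pieces (rather than on a symbolic space) and, crucially, does not develop the Dolgopyat estimates from scratch but imports them from Oh--Winter (their Theorem~2.7 and its corollary), which already handle the joint $(t,m)$-dependence uniformly---so the step you flag as hardest is in fact quoted rather than reproved.
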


We can also allow $I$ and $S$ to shrink at suitably slow rates as 
$n$ increases. The corresponding result will appear below as Theorem \ref{main}.

\section{Hyperbolic rational maps}

Let $f:\widehat{\mathbb{C}}\rightarrow\widehat{\mathbb{C}}$ be a rational map of degree 
$d \ge 2$. 
Recalling the definitions in the introduction,
a periodic orbit can be classified as \textit{repelling, attracting} or \textit{indifferent} depending on whether its multiplier has modulus greater than, less than, or equal to one, respectively. Then, the Julia set of $f$ is defined as the closure of the union of repelling periodic orbits and denoted by $J=J(f)$. It is a compact $f^{\pm1}$-invariant subset of $\widehat\C$ and the reader is referred to Milnor’s classical text \cite{mil} for an excellent and systematic introduction to the dynamics of functions of one complex variable. In particular, we note that such a map has topological entropy $h(f)=\log d$
and $\#\{z \in \mathbb C : f^n(z)=z\}=d^n$.

We say that a rational map $f:\widehat{\C}\rightarrow\widehat{\mathbb{C}}$ is \textit{hyperbolic} if  $f$ is eventually expanding on $J$, that is there exist
constants $c>0$ and $\gamma>1$ such that
\begin{align}
     |(f^n)'(z)| \ge c\gamma^n    \label{hyperbol}
\end{align}
for all $z \in J$ and all $n \ge 1$.


For such a map, it is known that at most $2d-2$ primitive periodic orbits are not repelling. Therefore, to study asymptotic counting problems for periodic orbits of $f$ we can focus, without any loss of generality, to the study of the repelling periodic orbits. 
We write $\delta$  for the Hausdorff dimension of $J$;
this satisfies $0<\delta<2$ \cite{sullivan}.
We will impose an additional hypothesis on $f$: we suppose that $J$ is not contained in any circle
in $\widehat{\mathbb C}$. In particular, this implies that $f$ is not conjugate by a M\"obius transformation 
to a monomial $z \mapsto z^{\pm d}$ for any $d\in\N$.

We will now give a more precise version of our results.
As in the introduction,
$\mathcal M_f$ is the set of $f$-invariant probability measures on $J$, which is 
convex and compact with respect to the weak$^*$ topology.
Hence, the image of $\mathcal M_f$ onto the reals under the continuous projection \[\mu\mapsto \int_J \log|f'|\,d\mu\] is an interval, which we denote by $\mathcal I_f$. 
Since we are assuming that $f$ is not M\"obius conjugate to a  monomial, $\mathcal I_f$ has 
non-empty interior.
(If $\mathcal I_f$ is a single point then $\log |f'|$ is cohomologous to a constant, which is incompatible with
(\ref{ow_pot}).)
We define
\[
H(\alpha):=\sup\left\{h_f(\mu)\,:\, \mu \in \mathcal M_f \text{ with }\int\log|f'|\,d\mu=\alpha \right\},
\]
where $h_f(\mu)$ denotes the measure-theoretic entropy.
There is a unique $\mu_\alpha \in \mathcal M_f$ that realises this supremum above and a unique
$\xi_\alpha \in \mathbb R$ such that
\[
h_f(\mu_\alpha) + \xi_\alpha \int \log |f'| \, d\mu_\alpha
= \sup\left\{h_f(\mu) + \xi_\alpha \int \log |f'| \, d\mu \hbox{ : } \mu \in \mathcal M_f\right\}.
\]
We also define the variance of $\log |f'| -\alpha$ by 
\[\sigma^2_\alpha:=\lim_{n\rightarrow\infty}\frac{1}{n}\int 
\big( \log |(f^n)'|-n\alpha \big)^2 \,d\mu_\alpha.\]
Our hypothesis on $f$ implies that $\sigma_\alpha^2>0$.
These statements will be proved in the next section.

We want to consider the quantity $\pi(n,\alpha,I,S)$ defined in the introduction.
However, we also wish to consider a situation where $I$ 
and $S$ shrink as $n \to \infty$.
To do this, let $K \subset \R$ be a compact set, let 
$(I_n)_{n=1}^\infty$ be a sequence of intervals contained 
in $K$ and let $(S_n)_{n=1}^\infty$ be a sequence of arcs 
in $\mathbb{S}^1$.
 We are mainly interested in the two special cases where the sequences
$(I_n)_{n=1}^\infty$ and $(S_n)_{n=1}^\infty$
are constant, corresponding to the case of a fixed interval and a fixed arc as in the introduction, and where the sequences $(\ell(I_n))_{n=1}^\infty$ and $(\nu(S_n))_{n=1}^\infty$ tend to zero, hence realising shrinking intervals. Similar asymptotic counting problems were considered in \cite{rcor}, \cite{ps-erg} and \cite{ps-man}. 
 
We say that a sequence $(s_n)_{n=1}^{\infty}$ has sub-exponential growth if $\limsup_{n\rightarrow\infty}|\log s_n|/n=0$. We have the following theorem.

\begin{theorem}\label{main}
Let $f : \widehat{\mathbb{C}} \rightarrow \widehat{\mathbb{C}}$ be a hyperbolic rational map of degree at least $2$ such that its Julia set is not contained in a circle in $\widehat \C$. 
Let $K \subset \R$ be a compact set, 
let $(I_n)_{n=1}^\infty$ be a sequence of intervals 
in $K$ and let $(S_n)_{n=1}^\infty$ be a sequence of 
arcs in $\mathbb{S}^1$. Furthermore, suppose that
$(\ell(I_n)^{-1})_{n=1}^{\infty}$ and $(\nu(S_n)^{-1})_{n=1}^{\infty}$ have sub-exponential growth. Then, for each $\alpha \in \mathrm{int}(\mathcal I_f)$, we have that
    \begin{align}\label{main1}
        \pi(n,\alpha,I_n,S_n)\sim\frac{\nu(S_n)}{\sigma_\alpha\sqrt{2\pi}}\int_{I_n} e^{-\xi_\alpha x} \, dx \; \frac{ e^{H(\alpha)n}}{n^{3/2}},\,\,\,\,\,\text{  as $n\rightarrow\infty$.     }
    \end{align}
    In particular, if in addition we have that 
    $\lim_{n\rightarrow\infty}\ell(I_n)=0$ and  $p_n\in I_n$ 
    then
    \begin{align}\label{main2}
       \pi(n,\alpha,I_n,S_n)\sim \frac{\nu(S_n)\ell(I_n)e^{-\xi_\alpha p_n}}{\sigma_\alpha\sqrt{2\pi }} \;\frac{e^{H(\alpha)n}}{n^{3/2}},\,\,\,\,\,\text{  as $n\rightarrow\infty$.     }
  \end{align} 
    \end{theorem}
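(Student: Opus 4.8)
The plan is to pass from the rational map $f$ on its Julia set to a subshift of finite type via a Markov partition (these exist for hyperbolic rational maps, since $f|_J$ is expanding and the dynamics is conjugate to a mixing subshift of finite type after a finite coding), and then transport the counting problem to the symbolic setting. Under the coding, periodic orbits of $f$ of period $n$ correspond to periodic points of period $n$ in the shift (up to the usual bounded ambiguity), $\log|\lambda(\tau)|$ becomes a Birkhoff sum $g^n(x) := \sum_{k=0}^{n-1} g(\sigma^k x)$ of a H\"older function $g = \log|f'|\circ(\text{coding})$, and the holonomy $\hat\lambda(\tau)$ becomes the exponential $e^{2\pi i h^n(x)}$ of the Birkhoff sum of an $\mathbb R$-valued (or $\mathbb S^1$-valued) H\"older cocycle $h$. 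So we are reduced to counting $\{x : \sigma^n x = x,\ g^n(x) - n\alpha \in I_n,\ h^n(x) \in S_n\}$ with multiplicity-one weighting, i.e. a weighted prime-orbit count with two simultaneous constraints, one on a real-valued Birkhoff sum (a local limit constraint) and one on an $\mathbb S^1$-valued one (an equidistribution/holonomy constraint).

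The core tool is the transfer operator. For $s\in\C$ near a real parameter, $b\in\R$ and $k\in\Z$ introduce the family
\[
L_{s,b,k}\,w(x) = \sum_{\sigma y = x} e^{(-s)\,g(y)}\, e^{i b\, g(y)}\, e^{2\pi i k\, h(y)}\, w(y),
\]
acting on a space of H\"older functions; here $-s$ carries the topological/entropy weighting that selects the level set $\{g\text{-average} = \alpha\}$ and eventually produces $H(\alpha)$, while the $b$ and $k$ variables are the Fourier duals of $I_n$ and $S_n$ respectively. The leading eigenvalue $e^{P(\cdot)}$ at $b=k=0$, together with the pressure equation, pins down the parameter $s_0 = -\xi_\alpha$ for which $P(-s_0 g) = H(\alpha) + s_0\alpha$ (Legendre duality), matching the $\xi_\alpha$ and $\mu_\alpha$ of the excerpt; the second derivative in $b$ at the top of the spectrum gives $\sigma_\alpha^2>0$ (nonzero by the non-cohomologous hypothesis, which also forces the $k$-twisted operators to have spectral radius strictly less than $e^{P}$ for $k\neq 0$ — this is where the assumption that $J$ is not contained in a circle enters, guaranteeing $h$ is not cohomologous to a function into a proper closed subgroup of $\mathbb S^1$). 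Writing $N_n(I_n,S_n)$ for the symbolic count, Fourier inversion in both variables gives an expression of the schematic form
\[
N_n = \frac{1}{(2\pi)^2}\sum_{k\in\Z}\widehat{\mathbf 1_{S_n}}(k)\int_{\R}\widehat{\mathbf 1_{I_n}}(b)\, e^{-inb\alpha}\,\operatorname{tr}\!\big(L_{s_0 + \text{(something)},\,b,\,k}^{\,n}\big)\, db + (\text{lower order}),
\]
and one estimates $\operatorname{tr}(L^n)$ via the dominant eigenvalue near $(b,k)=(0,0)$ and exponential decay of everything else. The $b$-integral near $b=0$, after the quadratic expansion $P \approx P(0) - \tfrac12\sigma_\alpha^2 b^2 + \cdots$, is a Gaussian integral producing the $n^{-1/2}$ factor and the $1/(\sigma_\alpha\sqrt{2\pi})$ constant; an additional $n^{-1}$ comes from the prime-orbit (Dirichlet-series / Tauberian, or rather Perron-formula) extraction at the critical exponent $H(\alpha)$ — this is the standard mechanism behind the $e^{H(\alpha)n}/n^{3/2}$ shape — and $\widehat{\mathbf 1_{S_n}}(0) = \nu(S_n)$, $\widehat{\mathbf 1_{I_n}}(b)\approx \int_{I_n} e^{-\xi_\alpha x}dx$ (after shifting the $b$-contour, or equivalently twisting by the eigenfunction at $s_0$) give the geometric factors in \eqref{main1}. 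Once \eqref{main1} is proved, \eqref{main2} is immediate: when $\ell(I_n)\to 0$ and $p_n\in I_n$, $\int_{I_n} e^{-\xi_\alpha x}\,dx \sim \ell(I_n)\, e^{-\xi_\alpha p_n}$ by uniform continuity of $e^{-\xi_\alpha x}$ on the compact set $K$, and substituting this into \eqref{main1} yields the claimed asymptotic.

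The main obstacle — and the reason Dolgopyat's work is invoked — is that the constraints are allowed to shrink, so one cannot simply cite a fixed-$b$, fixed-$k$ spectral gap and a smooth-function argument: the Fourier supports $\widehat{\mathbf 1_{I_n}}$, $\widehat{\mathbf 1_{S_n}}$ spread out as $n\to\infty$, and one needs \emph{uniform} control of $\|L_{s,b,k}^n\|$ for $b$ and $k$ ranging over large (sub-exponentially growing) sets. This requires Dolgopyat-type oscillatory-cancellation estimates: a bound of the form $\|L_{s_0,b,k}^n\| \le C\,\rho^n$ with $\rho < e^{P(-s_0 g)}$ and $C$ at most polynomial in $(|b|,|k|)$, valid for $|b|,|k|$ up to $e^{o(n)}$, obtained by the usual $L^2$-contraction / non-integrability-condition argument adapted to the joint $(b,k)$ twist. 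Controlling the interaction of the two twists simultaneously (verifying the relevant UNI / total-non-integrability condition for the pair $(g,h)$, and checking it is implied by $\sigma_\alpha^2>0$ together with the non-circle hypothesis) is the technical heart; the sub-exponential-growth hypothesis on $(\ell(I_n)^{-1})$ and $(\nu(S_n)^{-1})$ is exactly what is needed so that these polynomial-in-frequency error terms are dominated by the exponential gap between $\rho$ and the leading eigenvalue. The remaining steps — the Markov coding, the pressure/Legendre bookkeeping identifying $H(\alpha)$, $\xi_\alpha$, $\sigma_\alpha$, and the Tauberian/Perron extraction of the $n^{-3/2} e^{H(\alpha)n}$ asymptotic — are by now standard thermodynamic-formalism arguments and I would carry them out after the uniform transfer-operator bounds are in place.
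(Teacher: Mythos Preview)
Your overall strategy matches the paper's: transfer operators on a Markov partition, Fourier inversion in both the real frequency $b$ (dual to $\log|\lambda|-n\alpha$) and the integer frequency $k$ (dual to the holonomy), Dolgopyat-type bounds $\|\mathcal L_{\xi+ib,k}^n\|\le C(|b|+|k|)^{1+\varepsilon}\rho^n$ for the large-frequency tails, a quadratic expansion of pressure near $(b,k)=(0,0)$ producing the Gaussian $n^{-1/2}$, and the sub-exponential growth of $\ell(I_n)^{-1},\nu(S_n)^{-1}$ to absorb the polynomial prefactor. Two points need correction.

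First, your schematic formula plugs in $\widehat{\mathbf 1_{I_n}}$ and $\widehat{\mathbf 1_{S_n}}$ directly. This does not close: $\widehat{\mathbf 1_{I_n}}(b)$ decays only like $|b|^{-1}$, so against the Dolgopyat bound the tail integrand behaves like $|b|^{-1}(|b|+|k|)^{2+\varepsilon}\rho^n$, which is not integrable in $b$ (and similarly the sum in $k$ diverges). The shrinking of $I_n,S_n$ is not the only obstruction; the slow Fourier decay of indicators is a separate problem. The paper deals with this by first proving the asymptotic for $C^4$ test functions $\phi,\psi$ in place of $\mathbf 1_{[-1/2,1/2]}$ and $\mathbf 1_{[-\kappa/2,\kappa/2]}$ (Proposition~\ref{aux}), whose transforms decay like $|b|^{-4}$ and $|k|^{-4}$, enough to beat $(|b|+|k|)^{2+\varepsilon}$; only afterwards does it squeeze the indicators between mollified $C^4$ approximants to recover $\pi(n,\alpha,I_n,S_n)$. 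This approximation step is routine but not optional, and your sketch omits it.

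Second, the extra $n^{-1}$ does not come from a Tauberian or Perron-formula extraction. Here orbits are counted by fixed period $n$, not by $|\lambda(\tau)|<t$, so there is no Dirichlet-series/abscissa analysis at all. The $n^{-1}$ is elementary: a primitive orbit of period $n$ consists of $n$ fixed points of $f^n$, so $\pi_{\phi,\psi}(n)=\frac{1}{n}\sum_{f^nx=x}(\cdots)+O(e^{(H(\alpha)+\eta)n/2})$, the error coming from non-primitive points (Lemma~\ref{sumchange}). The $n^{-1/2}$ is exactly the Gaussian factor you describe. (Minor: your Legendre identity has a sign slip; one has $P(\xi_\alpha r)=H(\alpha)+\xi_\alpha\alpha$, equivalently $H(\alpha)=P(\xi_\alpha R)$ with $R=r-\alpha$.)
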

    
\begin{corollary}    
If $\alpha=\int \log|f'|\,d\mu_{\max}$, where $\mu_{\max}$ is the measure of maximal entropy then
\begin{align}
    \pi(n,\alpha,I_n,S_n)\sim\frac{\nu (S_n)\ell(I_n)}{\sigma_\alpha\sqrt{2\pi }} \;\frac{d^{n}}{n^{3/2}}, \,\,\,\,\,\text{  as $n\rightarrow\infty$.    } \label{main3}
\end{align}
\end{corollary}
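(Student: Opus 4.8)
The plan is to deduce the Corollary directly from Theorem \ref{main}. Set $\alpha := \int \log|f'|\,d\mu_{\max}$. First I would check that $\alpha \in \mathrm{int}(\mathcal I_f)$, so that Theorem \ref{main} applies: since $\log|f'|$ is not cohomologous to a constant, $\mu_{\max}$ realises neither endpoint of $\mathcal I_f$ (otherwise $\mu_{\max}$ would be the equilibrium state of $t\log|f'|$ for all $t$ of one sign, which would force $\log|f'|$ to be cohomologous to a constant), and hence $\alpha$ is interior. It then remains only to evaluate the two $\alpha$-dependent constants $H(\alpha)$ and $\xi_\alpha$ appearing in \eqref{main1} at this particular value of $\alpha$, and substitute.

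Next I would identify the entropy maximiser $\mu_\alpha$. By the variational principle, $h_f(\mu) \le h(f) = \log d$ for every $\mu \in \mathcal M_f$, with equality for $\mu = \mu_{\max}$. Since $\mu_{\max}$ satisfies the constraint $\int \log|f'|\,d\mu_{\max} = \alpha$, it is admissible in the supremum defining $H(\alpha)$ and attains it; by uniqueness of the maximiser $\mu_\alpha$, we get $\mu_\alpha = \mu_{\max}$ and therefore $H(\alpha) = \log d$, so that $e^{H(\alpha)n} = d^n$. To see that $\xi_\alpha = 0$, recall that $\xi_\alpha$ is characterised as the unique real number for which $\mu_\alpha$ realises $\sup_{\mu \in \mathcal M_f}\{h_f(\mu) + \xi_\alpha \int \log|f'|\,d\mu\}$; taking the candidate value $0$, this supremum equals $\sup_{\mu} h_f(\mu) = \log d$, which is attained precisely by $\mu_{\max} = \mu_\alpha$, so $0$ has the defining property, and uniqueness forces $\xi_\alpha = 0$. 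Consequently $\int_{I_n} e^{-\xi_\alpha x}\,dx = \int_{I_n} dx = \ell(I_n)$.

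Substituting $H(\alpha) = \log d$ and $\xi_\alpha = 0$ into \eqref{main1} then yields exactly \eqref{main3}, completing the argument. There is no genuine obstacle here: the entire content of the Corollary is the identification of these constants, and the one point requiring care is the appeal to uniqueness of $\mu_\alpha$ and of $\xi_\alpha$. Both are consequences of the strict concavity of $H$ on $\mathrm{int}(\mathcal I_f)$ together with the differentiability of the pressure function $t \mapsto P(t\log|f'|)$, whose derivative at $t=0$ equals $\int \log|f'|\,d\mu_{\max} = \alpha$ (pinning down $\xi_\alpha = 0$) — and these are precisely the facts established in the next section, so the Corollary requires no further work.
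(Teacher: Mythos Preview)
Your proof is correct and follows essentially the same approach as the paper: identify $\xi_\alpha = 0$ (since $\mu_{\max}$ is the equilibrium state of $0\cdot r$, equivalently $\mathfrak p'(0)=\alpha$), conclude $H(\alpha)=h_f(\mu_{\max})=\log d$, and substitute into \eqref{main1}. You supply more justification than the paper does --- in particular the check that $\alpha\in\mathrm{int}(\mathcal I_f)$, which the paper leaves implicit --- but the argument is the same.
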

\section{Thermodynamic formalism for hyperbolic rational maps}
The main purpose of this section is to describe how one can study the dynamics of a hyperbolic rational map using transfer operators and to obtain some decay estimates for them. We begin by recalling the essential features of this approach but for more details the reader is referred to \cite{rueexp}. 
We fix a hyperbolic rational map $f$ of degree $d\geq2$. 
Further, we assume that the Julia set of $f$ is not contained inside a circle in $\widehat \C$.

\subsection{Markov Partitions}
For any small $\e>0$,
we can find a Markov partition for $J$:
compact subsets $P_1, \ldots , P_{N}$ of $J$
each of diameter at most $\e$, such that
\begin{enumerate}
\item $J = \bigcup_{i=1}^{N} P_i$,
\item 
$\overline{ \mbox{int}(P_i)} = P_i$,  $i=1,\ldots,N$,
 \item 
 $\mbox{int}(P_i) \cap \mbox{int}(P_j)= \emptyset$, whenever $i\neq j$,
\item for each $i=1,\ldots, N$, 
$f(P_i) = \bigcup_{j \in \mathcal N_i} P_j$,
where $\mathcal N_i =\{j \in \{1,\ldots,N\}
\hbox{ : } f(P_i) \cap \mathrm{int}(P_j) \ne \emptyset\}$
\end{enumerate}
(where closure and interior is taken relative to $J$).
 
Given a Markov partition $P_1, \ldots, P_{N}$, 
we can find open neighbourhoods
$U_j \supset P_j$ such that: 
\begin{enumerate}
 \item $f$ is injective on the closure of each $U_j$
 and
 on the union $U_i\cup U_j$, whenever $U_i \cap U_j \neq \emptyset$,
 \item each $P_i$ is {\it not} contained in $ \bigcup_{j\neq i} \overline{U_j}$,
 \item for each pair $i, j$ with $f(P_i) \supset P_j$ 
 there is a local inverse $g_{ij} : U_j \rightarrow U_i$ for $f$.
\end{enumerate}   
We write $U = \coprod_{i=1}^N U_i$ for the {\em disjoint}  union of the neighbourhoods $U_i$.

The structure of the partition allows us to define an 
$N\times N$ matrix $M$ with zero-one entries, where
\[ M_{i j} = \left\lbrace \begin{array}{l} 1 \mbox{ if } f(P_i) \supset P_j,  \\ 0 \mbox{ otherwise. } \end{array} \right. \]
\subsection{Ruelle Transfer Operators and the Pressure Function} \label{Definetransferoperatorshere} 

By the hyperbolicity assumption the Julia set of $f$, and hence $U$, does not contain any critical points, that is points where the derivative of $f$ vanishes. We can therefore define the following real analytic functions related to $f$, which will help us in the study of multipliers and holonomies of periodic orbits.
\begin{definition}\label{dist}We define the \textit{distortion function}
\[r(z) = \log |f'(z)|\]
and the \textit{rotation function}
\[\theta(z) = \arg(f'(z)) \in \mathbb{R} / 2\pi\mathbb{Z},\]
which are both defined on $U$. 
\end{definition} 

For a function $w : U \to \mathbb R$ (or $\mathbb C$)
and $n \ge 1$, we write
\[
w^n(z) = \sum_{j = 0}^{n-1} w(f^{j}(z)).
\]
(The context should make clear that this is not
an iterate.)
Hence, when $\tau =\{z,f(z),\ldots,f^{n-1}(z)\} \in\mathcal P_n$, we have $\lambda(\tau)=
(f^n)'(z)=e^{r^n(z)+ i\theta^n(z)}$.

We proceed to define the Ruelle transfer operators as well as recalling some concepts from thermodynamic formalism. Write $C^1(U)$ for functions in $C^1(U, \C)$ with bounded derivatives. Then, for $F\in C^1(U)$, we define the transfer operator
$\mathcal{L}_{F} : C^1(U) \rightarrow C^1(U)$
by
\begin{eqnarray*}( \mathcal{L}_{F}\,w) (x) &:=& \sum_{i\, :
\, M_{ij} = 1 } e^{F(g_{ij}x)}w(g_{ij}x)  \mbox{ when } x \in U_j .\end{eqnarray*}
Furthermore, we define a family of modified $C^1$ norms on $C^1(U)$ by 
\[ \left\|w\right\|_{(t)} := \begin{cases} \left\|w\right\|_\infty + \frac{\left\| w' \right\|_{\infty}}{t}& \text{ if $t\ge 1,$}
\\  \left\|w\right\|_\infty + {\left\| w' \right\|_{\infty}} &\text{if $0<t<1$}.\end{cases} \]
The reason for this is that at the end of this section we will encounter a family of transfer operators which is not uniformly bounded using the usual $C^1$ norm. However, these modified norms $\left\| \cdot \right\|_{(t)}$ will help us find sufficiently good bounds for large values of $t$.

\begin{definition} Given a continuous function $g:J\rightarrow\R$ we define the topological pressure of $g$ by
\[P(g) := \sup\ \left\{ h_f( \mu) + \int g\, d\mu
\hbox{ : } \mu \in \mathcal M_f\right\}.\]
Moreover, we call $\mu$ an equilibrium state of 
$g$ if $P(g)=h_f( \mu) + \int g\, d\mu$.
\end{definition}
If $g$ is a H\"older continuous
function then it has a unique equilibrium state, which is fully supported and
ergodic; we denote this by $m_g$. Given two functions $g,h$ we have the inequality
\begin{align}\label{presnorm}
    \left| P(g)-P(h) \right| \leq \norm{g-h}_{\infty}
\end{align}
Two continuous functions $g$ and $h$ are called cohomologous if there exists a continuous function $u:J\rightarrow\R$ such that $g-h=u\circ f-u$. 
If $g$ and $h$ are H\"older continuous then $m_g=m_h$ 
if and only if $g-h$ is cohomologous to a constant.
If $g$ and $h$ are H\"older continuous then the function $\mathbb R \to \mathbb R: 
t\mapsto P(tg+h)$ is real analytic and
\begin{align}
    \frac{dP(tg+h)}{dt}\bigg|_{t=0}&=\int g\,dm_h ,\label{presder}\\
    \frac{d^2P(tg+h)}{dt^2}\bigg|_{t=0}&=\lim_{n\rightarrow\infty}\frac{1}{n} \int \left( g^n(x)-n\int g\,dm_h \right)^2\,dm_h,
\end{align}
see \cite{rue2,papo}.
Furthermore, if $g$ is not cohomologous to a constant
then $t\mapsto P(tg+h)$ is strictly convex and
\begin{equation}
\frac{d^2P(tg+h)}{dt^2}\bigg|_{t=0}>0.
\end{equation}

We will now prove some of the statements made in the previous section. We have the following result.

\begin{lemma}\label{equil}
For each $\alpha \in \mathrm{int}(\mathcal I_f)$, there is a unique $\xi=\xi(\alpha) \in \mathbb R$
such that
$
H(\alpha) = h_f(m_{\xi r})
$
and 
\[
\int r \, dm_{\xi r} = \alpha.
\]
\end{lemma}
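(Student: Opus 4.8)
The plan is to read off $\xi(\alpha)$ and the identity $H(\alpha)=h_f(m_{\xi r})$ from the pressure function $t\mapsto P(tr)$, where $r=\log|f'|$, using its Legendre-type duality with entropy. Since $r$ is real analytic, hence H\"older, the function $tr$ has a unique equilibrium state $m_{tr}$ for every $t\in\R$, and by \eqref{presder} (applied with $g=r$ and $h=t_0r$ at the base point) we have $\frac{d}{dt}P(tr)=\int r\,dm_{tr}$. Because $f$ is not M\"obius conjugate to a monomial, $\mathcal I_f$ has non-empty interior, so $r$ is not cohomologous to a constant; hence $t\mapsto P(tr)$ is strictly convex, and therefore $t\mapsto\int r\,dm_{tr}$ is a continuous, strictly increasing function of $t$. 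This already gives the uniqueness of any $\xi$ with $\int r\,dm_{\xi r}=\alpha$.

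Next I would identify the range of $t\mapsto\int r\,dm_{tr}$ as $\mathrm{int}(\mathcal I_f)$. By the variational principle, $P(tr)=\sup_{\mu\in\mathcal M_f}\bigl(h_f(\mu)+t\int r\,d\mu\bigr)$, so for $t>0$, using $0\le h_f(\mu)\le\log d$, we get $\sup_{\mu}\int r\,d\mu\le P(tr)/t\le\sup_{\mu}\int r\,d\mu+(\log d)/t$, whence $P(tr)/t\to\sup\mathcal I_f$ as $t\to+\infty$. On the other hand, for the convex function $\phi(t)=P(tr)$ one has $\phi(t)/t=\phi(0)/t+\frac1t\int_0^t\phi'(s)\,ds\to\lim_{s\to+\infty}\phi'(s)$, so $\lim_{t\to+\infty}\int r\,dm_{tr}=\sup\mathcal I_f$, and symmetrically $\lim_{t\to-\infty}\int r\,dm_{tr}=\inf\mathcal I_f$. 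Since the map is continuous and strictly increasing with these limits, its image is exactly the open interval $\mathrm{int}(\mathcal I_f)$. Hence for each $\alpha\in\mathrm{int}(\mathcal I_f)$ there is a unique $\xi=\xi(\alpha)\in\R$ with $\int r\,dm_{\xi r}=\alpha$.

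Finally I would verify $H(\alpha)=h_f(m_{\xi r})$. Since $m_{\xi r}$ is the equilibrium state of $\xi r$ and $\int r\,dm_{\xi r}=\alpha$, we have $P(\xi r)=h_f(m_{\xi r})+\xi\alpha$. For any $\mu\in\mathcal M_f$ with $\int r\,d\mu=\alpha$, the variational principle gives $h_f(\mu)+\xi\alpha=h_f(\mu)+\xi\int r\,d\mu\le P(\xi r)=h_f(m_{\xi r})+\xi\alpha$, so $h_f(\mu)\le h_f(m_{\xi r})$; as $m_{\xi r}$ itself satisfies the constraint $\int r\,dm_{\xi r}=\alpha$, taking the supremum over such $\mu$ yields $H(\alpha)=h_f(m_{\xi r})$.

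The main obstacle is the middle step: pinning down $\lim_{t\to\pm\infty}\int r\,dm_{tr}$ as the endpoints of $\mathcal I_f$, which is precisely what makes $t\mapsto\int r\,dm_{tr}$ surject onto $\mathrm{int}(\mathcal I_f)$. The convexity argument above settles this, but one must keep track of the fact that $\mathcal I_f$ being a genuine interval — equivalently $r$ not cohomologous to a constant, which the hypothesis on $f$ guarantees — is exactly what delivers both the strict convexity of $t\mapsto P(tr)$ (hence strict monotonicity and uniqueness of $\xi$) and the non-degeneracy $\mathrm{int}(\mathcal I_f)\ne\emptyset$.
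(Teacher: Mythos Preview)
Your proof is correct and follows essentially the same route as the paper: strict convexity of $t\mapsto P(tr)$ (from $r$ not cohomologous to a constant) makes $t\mapsto\int r\,dm_{tr}$ strictly increasing, its range is identified with $\mathrm{int}(\mathcal I_f)$, and then the variational inequality for the equilibrium state $m_{\xi r}$ yields $H(\alpha)=h_f(m_{\xi r})$. The only minor difference is in the range step: the paper argues geometrically that the graph of the convex function $\mathfrak p(t)=P(tr)$ lies above every line of slope $\int r\,d\mu$, so each such slope lies in the closure of the derivative range, whereas you compute $\lim_{t\to\pm\infty}\mathfrak p'(t)$ directly via the asymptotics of $\mathfrak p(t)/t$.
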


\begin{proof}
Since the Julia set of $f$ is not contained in a circle 
in $\widehat\C$, then $f$ is not conjugate to a monomial and so the distortion function $r$ is not cohomologous to a constant. 
Therefore, the function $\mathfrak p : \mathbb R \to \mathbb R$ defined by
$\mathfrak p(t) =P(tr)$ is strictly convex. 

Now
consider the set 
\[
\mathcal D:=\{\mathfrak p'(\xi) \hbox{ : } \xi \in \mathbb R\}
= \left\{\int r \, dm_{\xi r} \hbox{ : } \xi \in \mathbb R
\right\} \subset \mathcal I_f.
\]
Since $\mathfrak p$ is strictly
convex, $\mathcal D$ is an open interval.
By the definition of pressure, for all $\mu \in \mathcal 
M_f$,
\[
\mathfrak p(t) \ge h_f(\mu) + t\int r  \, d\mu.
\]
In particular, the graph of the convex function 
$\mathfrak p$ lies above a line with slope $\int r \, d\mu$  (possibly touching it tangentially) and so 
$\int r \, d\mu \in \overline{\mathcal D}$. Thus, since $\mu$ is arbitrary, $\mathrm{int}(\mathcal I_f) \subset
\overline{\mathcal D}$, and so we have
$\mathcal D = \mathrm{int}(\mathcal I_f)$.
Thus, for $\alpha \in \mathrm{int}(\mathcal I_f)$, there is a unique $\xi=\xi(\alpha) \in \mathbb R$ with
\[
\alpha = \mathfrak p'(\xi) = \int r \, dm_{\xi r}.
\]

Since the map $\mu \mapsto h_f(\mu)$ is upper semi-continuous \cite{lyubich}, the supremum in
\[
H(\alpha) = 
\sup\left\{h_f(\mu) \hbox{ : } \int r \, d\mu = \alpha\right\}
\]
is attained.
Since $m_{\xi r}$ is the equilibrium state for $\xi r$, 
we have, for any $\mu \in \mathcal M_f$ with 
$\mu \ne m_{\xi r}$,
\[
h_f(m_{\xi r}) + \xi \int r \, dm_{\xi r} 
>
h_f(\mu) + \xi \int r \, d\mu.
\]
In particular, if $\int r \, d\mu=\alpha$ then
$h_f(m_{\xi r}) > h_f(\mu)$. Therefore, 
$m_{\xi r}$ is the unique measure 
with the desired properties.
\end{proof}

\begin{remark}\label{intro_non-lattice}
Above, we used that $r$ is not cohomologous to a constant.
In fact, Oh and Winter proved a stronger statement (\cite[Corollary 6.2]{hodw}) that if $J$ is not contained in a circle then
$r$ satisfies the non-lattice property, i.e. that it is not cohomologous to any function of the form 
$a+bu$, with $a,b \in \R$ and
$u:J\rightarrow \Z$.
\end{remark}

Setting $\mu_\alpha = m_{\xi_\alpha r}$,
we have the measure 
whose existence is claimed in section 2.
Furthermore,
\[
\sigma_\alpha^2 = \lim_{n \to \infty}
\frac{1}{n} \int (r^n -n\alpha)^2 \, 
d\mu_\alpha = \mathfrak p''(\xi) >0,
\]
where we have used that $m_{\xi r} = m_{\xi(r-\alpha)}$.

For the rest of the paper, we will fix 
$\alpha\in\mathrm{int}(\Cal I_f)$ and set $\xi=\xi_\alpha$ as in Lemma \ref{equil}. We will also write $R:=r-\alpha$
and $R^n(x):=r^n(x)-n\alpha$ and note that,
by Lemma \ref{equil}, we have that 
$H(\alpha)=P\left(\xi R\right)$.

We will need to consider the function $s\mapsto e^{P(sR)}$, $s\in\R$. This function is real analytic and has an analytic extension in a neighborhood of the real line. The following lemma will prove useful in our analysis. 

\begin{lemma}\label{pressure}\cite[Proposition 4.7]{papo}
The function $t\mapsto e^{P\left((\xi+it)R\right)}$ is analytic and for some $\e>0$ we can write for each $t\in[-\e,\e]$
\[
e^{P\left((\xi+it)R\right)}=e^{P(\xi R)}\left(1-\frac{\sigma_\alpha^2 t^2}{2}+O(|t|^3)\right)
\]
where the implied constant is uniform on $[-\varepsilon,\varepsilon]$.
\end{lemma}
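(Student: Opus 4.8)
The plan is to obtain the statement as a Taylor expansion: recall that the leading eigenvalue of the Ruelle operator depends analytically on the (now complex) potential, and then read off its first three Taylor coefficients at the relevant point from the known formulas for the pressure.

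Write $\lambda(z)$ for the leading eigenvalue of the transfer operator $\mathcal L_{zR}$, so that $\lambda(s) = e^{P(sR)}$ for $s \in \mathbb R$. For real $s$ near $\xi$ this eigenvalue is simple and isolated, with a spectral gap, so by analytic perturbation theory for a simple isolated eigenvalue (Kato) the eigenvalue and the associated spectral projection depend holomorphically on the potential as we move $z$ off the real axis; thus $z \mapsto \lambda(z)$ is holomorphic on a neighbourhood of $\mathbb R$ in $\mathbb C$, which in particular establishes the analyticity claimed in the lemma. This is exactly \cite[Proposition 4.7]{papo}, which I would simply invoke. Since $\lambda(\xi) = e^{P(\xi R)} > 0$, the function $\phi := \log\lambda$ is holomorphic on a disc about $\xi$ and restricts to $s \mapsto P(sR)$ on the reals there.

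It then remains to compute $\phi(\xi)$, $\phi'(\xi)$ and $\phi''(\xi)$. Since $P(sR) = P(sr) - s\alpha = \mathfrak p(s) - s\alpha$, we get $\phi'(\xi) = \mathfrak p'(\xi) - \alpha = \int r\, dm_{\xi r} - \alpha = 0$ by Lemma \ref{equil}, while $\phi''(\xi) = \mathfrak p''(\xi) = \sigma_\alpha^2$ as recorded after Lemma \ref{equil} (equivalently, apply the second-derivative formula following (\ref{presder}) with potential $\xi R$ and function $R$, using $m_{\xi r} = m_{\xi R} = \mu_\alpha$ and $\int R\,d\mu_\alpha = 0$). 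Hence, on a complex disc of some radius $\varepsilon_0 > 0$ about $\xi$,
\[
\phi(z) = P(\xi R) + \frac{\sigma_\alpha^2}{2}(z - \xi)^2 + O(|z-\xi|^3),
\]
where, by the Cauchy estimates, the cubic remainder is $O(|z-\xi|^3)$ uniformly on the closed disc of radius $\varepsilon := \varepsilon_0 / 2$. Setting $z = \xi + it$ with $|t| \le \varepsilon$ gives
\[
P((\xi + it)R) = \phi(\xi + it) = P(\xi R) - \frac{\sigma_\alpha^2 t^2}{2} + O(|t|^3),
\]
uniformly on $[-\varepsilon, \varepsilon]$, and exponentiating (with $e^x = 1 + x + O(x^2)$ for $x$ in a bounded set) yields
\[
e^{P((\xi+it)R)} = e^{P(\xi R)}\left(1 - \frac{\sigma_\alpha^2 t^2}{2} + O(|t|^3)\right)
\]
uniformly on $[-\varepsilon,\varepsilon]$, as required.

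The only genuinely substantive ingredient is the holomorphic dependence of the top eigenvalue of the complexified transfer operator, i.e.\ the quasicompactness/spectral-gap input and Kato's perturbation theory; but this is precisely the cited \cite[Proposition 4.7]{papo}, so here it is taken as given, and the remaining work — two derivatives of the pressure and bookkeeping of the uniform cubic error by shrinking $\varepsilon$ — is routine.
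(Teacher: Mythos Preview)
The paper does not prove this lemma; it simply states it with a citation to \cite[Proposition 4.7]{papo}. Your argument is correct and is exactly the standard proof one finds in that reference: analytic perturbation of the simple leading eigenvalue of $\mathcal L_{zR}$ gives holomorphicity of $z\mapsto\lambda(z)$ near $\xi$, and the first two real derivatives of $\log\lambda$ at $\xi$ are read off from the pressure formulae (vanishing first derivative by the choice of $\xi$ in Lemma~\ref{equil}, second derivative equal to $\sigma_\alpha^2$), after which Taylor expansion and exponentiation give the stated estimate with uniform $O(|t|^3)$ remainder.
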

\subsection{Decay Estimates}
The approach in this subsection is motivated by Dolgopyat's seminal work on exponential mixing of Anosov flows in \cite{dol}. This was later used by 
Pollicott and Sharp to obtain an analogue of the prime number theorem with an exponential error term for closed geodesics on a compact negatively curved surface \cite{rexp}.
Naud adapted Dolgopyat's analysis to prove a similar result for convex co-compact surfaces \cite{Na}
as well as Oh and Winter whose work was in the current setting of expanding rational maps
\cite{hodw}. We use a similar approach to obtain bounds on the spectral radii of a family of transfer operators in order to extract our asymptotic result in the final section. 

Recall $f : \widehat{\mathbb{C}} \rightarrow \widehat{\mathbb{C}}$ is a hyperbolic rational map of degree at least $2$ and $\alpha,\, \xi$ are fixed constants as in Lemma \ref{equil}. We now consider the family of Ruelle transfer operators 
$\mathcal L_{(\xi+ib,k)}$, for $b\in\R$ and  $k\in\Z$, 
where $\mathcal L_{(s,k)} := \mathcal L_{sR+ik\theta}$.

We recall the following theorem.
\begin{theorem}[Ruelle--Perron--Frobenius Theorem, \cite{rueexp}]\label{thm_RPF}
Let $u \in C^1(U)$ be real valued. Then
\begin{itemize}
	    \item the operator $\mathcal{L}_{u}$ has a simple maximal positive eigenvalue $\lambda = e^{P(u)}$ with an associated strictly positive eigenfunction $\psi\in C^1(J)$,
	    \item the rest of the spectrum is contained in a disk of radius strictly smaller than $e^{P(u)}$ and
	    \item there is a unique probability measure $\mu$ on $J$ such that $\mathcal{L}_{u}^*\mu = e^{P(u)}\mu$ and $\int \psi\, d\mu =1$.
\end{itemize} 
		If $v \in C^1(U)$ is real valued then
		the spectral radius of $\mathcal{L}_{u+iv}$ is bounded above by $e^{P(u)}$.
	    
\end{theorem}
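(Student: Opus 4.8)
The plan is to transport the classical Ruelle--Perron--Frobenius theory for subshifts of finite type to $J$ via the Markov partition, the essential analytic input being that hyperbolicity (\ref{hyperbol}) forces compositions of the inverse branches $g_{ij}$ to contract uniformly, with derivatives of size $O(\gamma^{-n})$. This yields the \emph{bounded distortion} estimate: the ratio of the weights $e^{u^n(y)}$, $e^{u^n(y')}$ at two $f^n$-preimages $y,y'$ of a point lying in a common $n$-cylinder is bounded independently of $n,y,y'$. First I would produce the eigenmeasure by applying the Schauder--Tychonoff fixed point theorem to $\nu\mapsto\mathcal L_u^*\nu/(\mathcal L_u^*\nu)(1)$ on the weak$^*$-compact convex set of Borel probability measures on $J$, obtaining $\mu$ with $\mathcal L_u^*\mu=\lambda\mu$, $\lambda:=(\mathcal L_u^*\mu)(1)>0$. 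Next, the functions $h_n:=\lambda^{-n}\mathcal L_u^n 1$ satisfy $\int h_n\,d\mu=1$ and, by bounded distortion and the contraction of inverse branches, are uniformly bounded above, bounded below away from $0$, and uniformly bounded in $C^1$ (the chain rule plus $\sum_{k\ge1}\gamma^{-k}<\infty$ controls $\norm{h_n'}_\infty$); by Arzel\`a--Ascoli a subsequence of the Ces\`aro averages $\tfrac1N\sum_{n<N}h_n$ converges to a strictly positive $\psi\in C^1(J)$ with $\mathcal L_u\psi=\lambda\psi$ and $\int\psi\,d\mu=1$.

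The normalisation $\widehat{\mathcal L}w:=\lambda^{-1}\psi^{-1}\mathcal L_u(\psi w)$ satisfies $\widehat{\mathcal L}1=1$, and $m:=\psi\,d\mu$ is a probability measure with $\widehat{\mathcal L}^*m=m$, hence $f$-invariant. A computation with the Rokhlin formula identifies $\log\lambda$ with $h_f(m)+\int u\,dm$ and $m$ with the equilibrium state of $u$, so $\log\lambda=P(u)$ (alternatively this follows from $\tfrac1n\log\sum_{f^n z=z}e^{u^n(z)}\to\log\lambda$ and the variational principle). Since $f$ is hyperbolic, $f:J\to J$ is topologically exact, so after refining the Markov partition the matrix $M$ is primitive and $(f,m)$ is mixing; the standard peripheral-spectrum argument then applies: if $\widehat{\mathcal L}g=e^{i\beta}g$ with $g\neq0$, then $|g|\le\widehat{\mathcal L}|g|$ together with $\widehat{\mathcal L}^*m=m$ forces $|g|$ to be $m$-a.e.\ constant, and mixing forces $g$ constant and $\beta=0$. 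Undoing the normalisation, $\lambda=e^{P(u)}$ is a simple eigenvalue of $\mathcal L_u$ with eigenfunction $\psi$ and eigenmeasure $\mu$ (uniqueness of $\mu$ coming from simplicity of $\lambda$ for $\mathcal L_u^*$), and it is the only eigenvalue of modulus $\lambda$.

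To upgrade this to a spectral gap I would prove a Lasota--Yorke (Ionescu-Tulcea--Marinescu) inequality: using the chain rule and the $O(\gamma^{-n})$ contraction of inverse branches, $\norm{\mathcal L_u^n w}_\infty+\norm{(\mathcal L_u^n w)'}_\infty\le A\lambda^n\gamma^{-n}\big(\norm{w}_\infty+\norm{w'}_\infty\big)+B\lambda^n\norm{w}_\infty$ for constants $A,B$ and all $n$. Since $C^1(J)\hookrightarrow C^0(J)$ is compact, Hennion's theorem shows $\mathcal L_u$ is quasi-compact on $C^1(J)$ with essential spectral radius at most $\lambda\gamma^{-1}<\lambda$; combined with the previous paragraph, the spectrum of $\mathcal L_u$ outside a disk of radius strictly smaller than $\lambda$ reduces to the simple eigenvalue $\lambda$, which with the previous paragraph gives the first three assertions. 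For the last assertion, the pointwise bound $|\mathcal L_{u+iv}^n w|\le\mathcal L_u^n|w|$ — valid because $|e^{(u+iv)^n(y)}|=e^{u^n(y)}$ — together with the already-established $\norm{\mathcal L_u^n h}_\infty\le C\lambda^n\norm{h}_\infty$ gives $\norm{\mathcal L_{u+iv}^n w}_\infty\le C\lambda^n\norm{w}_\infty$. The same Lasota--Yorke inequality holds for $\mathcal L_{u+iv}$ (the extra term involves only $\norm{v'}_\infty$ and enlarges $B$), so $\mathcal L_{u+iv}$ is quasi-compact and any spectral point of modulus $>\lambda\gamma^{-1}$ is an eigenvalue; if $\mathcal L_{u+iv}g=\eta g$ with $g\neq0$ then $|\eta|^n\norm{g}_\infty\le C\lambda^n\norm{g}_\infty$ for all $n$, whence $|\eta|\le\lambda=e^{P(u)}$ and the spectral radius bound follows.

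The main obstacle is the spectral gap in the third paragraph: establishing quasi-compactness honestly requires the Lasota--Yorke estimate with the hyperbolic contraction rate $\gamma^{-1}$ playing the role of the small parameter, and excluding peripheral eigenvalues other than $\lambda$ relies on topological mixing of $f$ on $J$, itself a consequence of hyperbolicity and the Julia set being the closure of the repelling periodic points. The remaining ingredients — the fixed-point and compactness arguments, bounded distortion, the entropy formula and Hennion's theorem — are standard.
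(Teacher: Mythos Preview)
The paper does not prove Theorem \ref{thm_RPF}; it is quoted directly from Ruelle \cite{rueexp} (with the complex part being standard from e.g.\ \cite{papo}) and used as a black box. So there is no proof in the paper to compare your proposal against.

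That said, your sketch is the standard route to the RPF theorem for expanding maps (Schauder--Tychonoff for the eigenmeasure, bounded distortion plus Arzel\`a--Ascoli for the positive eigenfunction, normalisation and the mixing argument to rule out peripheral spectrum, then a Lasota--Yorke/Hennion argument for quasi-compactness, and pointwise domination $|\mathcal L_{u+iv}^n w|\le \mathcal L_u^n|w|$ for the complex bound). This is essentially how Ruelle \cite{rueexp} and Parry--Pollicott \cite{papo} proceed, so your approach is in line with the literature the paper cites. One small point to tidy: the paper's operators act on $C^1(U)$, the $C^1$ functions with bounded derivative on the disjoint union $U=\coprod_j U_j$ of open neighbourhoods of the Markov pieces, not on $C^1(J)$; the compactness step in your Hennion argument should accordingly be phrased on these domains (or on suitable H\"older spaces), since $J$ itself is typically a fractal with no intrinsic $C^1$ structure.
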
	    




By Theorem \ref{thm_RPF},
the spectral radius of $\mathcal L_{(\xi+ib,k)}$ is bounded above by $e^{P(\xi R)}$. The aim of this subsection is to show that in fact, when the Julia set of $f$ is not contained in a circle in $\widehat \C$ we can bound the spectral radius of $\mathcal L_{(\xi+ib,k)}$ away from $e^{P(\xi R)}$ when $(b,k) \ne (0,0)$. To achieve this we fix arbitrary $b\in\R$ and $k\in\Z$ and consider the transfer operator $\mathcal L_{(\xi+ib,k)}$.

As a first step we want to consider a normalised transfer operator and we thus add a coboundary and a constant to $\xi R$. Write $v=b R+ k\theta$ and 
\[
u=\xi R+\log\psi-\log\psi\circ f-P(\xi R)
\]
with $\psi$ the 
positive eigenfunction of $\mathcal{L}_{(\xi,0)}$ with corresponding eigenvalue $e^{P(\xi R)}$
guaranteed by Theorem \ref{thm_RPF}. We then get that
\begin{align}\label{normal}
\mathcal L_{u}\mathbbm 1 =\mathbbm 1 \,\text{   and   }\,\mathcal L_{(\xi+ib,k)}=e^{P(\xi R)} M \mathcal L_{u+iv} M^{-1}
\end{align}
where $M$ is the multiplication operator by $\psi$. Thus, to show that the spectral radius of $\mathcal L_{(\xi+ib,k)}$ is less than $e^{P(\xi R)}$ it suffices to show that the spectral radius of $\mathcal L_{u+iv}$ is less than $1$. Write $F_{b,k}=u+iv$. Below we show that the spectral radius of our operator, denoted by $\op{spr}(\mathcal L_{F_{b,k}})$, is strictly less than $1.$ 

Let $\mu$ be the unique probability measure on $J$
satisfying $\mathcal L_{u}^*\mu=\mu$, as guaranteed by 
Theorem \ref{thm_RPF}. We regard $\mu$ as a measure on $U$ by taking $\mu = \sum \mu_j$ where $\mu_j$ is the restriction of $\mu$ to the copy of $P_j$ sitting inside $U_j$. Since the boundary points of $\mu_j$, that is points in $U_j\setminus P_j$, have zero mass, $\mu$ is a probability measure on $U$.
\begin{definition}\label{doubling}
We say that a probability measure $m$ on $J$ has the doubling property if there exists a positive constant $C$ such that for all $x\in J$ and all $\e>0$ we have that \[m(B(x,2\e))\leq C \cdot m(B(x,\e)).\]
\end{definition}
We know that in fact $\mu$ is a doubling measure (see \cite[Theorem 8]{pes}). Moreover, as in \cite[Proposition 4.5]{hodw}, it follows that the restrictions $\mu_j$ satisfy the doubling property as probability measures on $P_j$. We therefore have all the properties required to get the following theorem.
\begin{theorem}[Theorem 2.7, \cite{hodw}] \label{decayestimate} 
Suppose that the Julia set of $f$ is not contained in a circle in $\widehat{\mathbb C}$. 
Then there exist  $C > 0$ and $\rho \in (0, 1)$ such that for any $w\in C^1(U)$ with $\left\|w\right\|_{(|b|+|k|)}\leq1$ and any $n\in \N$
\[  \left\| \mathcal{L}^n_{F_{b,k}}w\right\|_{L^2(\mu)} \leq C  \rho^{n}, \]
whenever $|b|+|k|\geq 1$. 
\end{theorem}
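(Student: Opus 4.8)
The plan is to follow Dolgopyat's method in the form used by Oh and Winter, Naud and Pollicott--Sharp, specialising their setup to the family $\mathcal L_{F_{b,k}} = \mathcal L_{u+iv}$ with $v = bR + k\theta$. The normalisation (\ref{normal}) has already reduced matters to showing that the normalised operator has spectral radius $<1$, uniformly over $|b|+|k|\ge 1$. Since $\mathcal L_u \mathbbm 1 = \mathbbm 1$, the key quantitative input is a contraction estimate in $L^2(\mu)$ for iterates $\mathcal L_{F_{b,k}}^n$; once we have $\|\mathcal L_{F_{b,k}}^n w\|_{L^2(\mu)}\le C\rho^n\|w\|_{(|b|+|k|)}$ for a fixed $\rho<1$ and $C>0$, standard arguments (bounding the $C^1$ norm of $\mathcal L_{F_{b,k}}^n w$ by a combination of an $L^2$ norm and a slowly growing Lipschitz term, via a Lasota--Yorke-type inequality for $\mathcal L_{u}^n$ on the modified norm $\|\cdot\|_{(t)}$) upgrade the $L^2$ decay to spectral-radius decay. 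So the real content is Theorem \ref{decayestimate} itself, and I would prove it by the Dolgopyat cancellation mechanism.

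First I would set up the geometric infrastructure: work with the local inverse branches $g_{i_0 i_1}\circ\cdots$, so that $\mathcal L_u^n$ is a sum over admissible words of length $n$ of terms $e^{u^n(\cdot)}$ evaluated at contracting branches, and $\mathcal L_{F_{b,k}}^n$ carries the extra oscillatory phase $e^{i v^n(\cdot)} = e^{i(bR^n + k\theta^n)(\cdot)}$. Because $f$ is hyperbolic on $J$, the inverse branches contract at a uniform exponential rate, so after $n_0 = n_0(b,k) \asymp \log(|b|+|k|)$ steps each branch image has diameter $\lesssim 1/(|b|+|k|)$; this is the scale at which the modified norm $\|\cdot\|_{(|b|+|k|)}$ is designed to make the relevant derivative terms $O(1)$. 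The crux is a \emph{Dolgopyat operator / non-integrability} step: one shows there are finitely many "dense sets" of words and a fixed $\eta\in(0,1)$ such that, on a definite proportion of the dynamics, the phase function $v^n$ oscillates by more than $O(1)$ between two suitably chosen branch choices, so that $|\mathcal L_{F_{b,k}}^n w|$ is pointwise dominated by $\mathcal L_u^n(\beta)$ for some $\beta$ with $\|\beta\|_\infty \le 1 - \eta$ on the relevant region. Iterating this over blocks of length $n_0$ and using $\mathcal L_u^n\mathbbm 1=\mathbbm 1$ together with the doubling property of $\mu$ (and of the restrictions $\mu_j$, as noted from \cite{hodw, pes}) to control the $L^2(\mu)$-masses of the pieces, one gets $\|\mathcal L_{F_{b,k}}^n w\|_{L^2(\mu)}\le C(1-\eta)^{\lfloor n/n_0\rfloor}$, which is of the form $C\rho^n$ after absorbing the $n_0$-dependence — here is where sub-exponentiality in $|b|+|k|$ would matter downstream, but for Theorem \ref{decayestimate} as stated $\rho$ and $C$ are uniform because the oscillation estimate is scale-invariant once we pass to the $\|\cdot\|_{(|b|+|k|)}$ normalisation.

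The essential non-triviality — the place where the hypothesis that $J$ is not contained in a circle enters — is establishing the \emph{local non-integrability / non-joint-cohomology} condition that powers the cancellation: one needs that the pair $(R,\theta)$ is not jointly cohomologous (with $\Z$-valued corrections for $\theta$) to something degenerate, equivalently that for every $(b,k)\ne(0,0)$ the function $bR+k\theta$ is not cohomologous to a $2\pi\Z$-valued function plus a constant. This is exactly the non-lattice statement attributed to Oh--Winter in Remark \ref{intro_non-lattice} (their \cite[Corollary 6.2]{hodw}), combined with a temporal-distortion/UNI estimate showing the phases genuinely spread at scale $1/(|b|+|k|)$; I would invoke \cite[Theorem 2.7]{hodw} directly here, since the statement to be proved \emph{is} that theorem, so really the "proof" is the observation that our $F_{b,k}$, $\mu$, $\mu_j$ satisfy all its hypotheses — the doubling property of $\mu$ and the non-circle condition — which we have just verified. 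The step I expect to be the genuine obstacle, were one to redo it from scratch rather than cite, is the Dolgopyat operator construction and the associated bookkeeping of "how much cancellation on how large a set", i.e. producing the explicit $\eta$ and verifying the $L^2(\mu)$ bound survives summation over branches using doubling; everything else (contraction of branches, Lasota--Yorke bounds, the upgrade from $L^2$ decay to spectral radius) is routine. Since all of this is carried out in \cite{hodw} in precisely the present setting, the proof reduces to a citation after checking hypotheses.

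\begin{proof}
As explained above, this is precisely \cite[Theorem 2.7]{hodw}. We only need to verify that the hypotheses of that theorem hold in our normalisation. By construction $\mathcal L_u\mathbbm 1=\mathbbm 1$, the potential $u$ is $C^1$ on $U$, and $v=bR+k\theta$ with $R,\theta$ real analytic on $U$ (Definition \ref{dist}); the measure $\mu$ satisfying $\mathcal L_u^*\mu=\mu$ is doubling by \cite[Theorem 8]{pes}, and its restrictions $\mu_j$ to the $P_j$ are doubling by \cite[Proposition 4.5]{hodw}. Finally, since the Julia set of $f$ is not contained in a circle in $\widehat{\mathbb C}$, the map $f$ is not M\"obius conjugate to a monomial, so $R$ is not cohomologous to a constant and, more strongly, by \cite[Corollary 6.2]{hodw} the non-lattice property holds, which is exactly the non-degeneracy hypothesis needed to run the Dolgopyat cancellation argument underlying \cite[Theorem 2.7]{hodw}. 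Applying that theorem yields the constants $C>0$ and $\rho\in(0,1)$ and the stated bound for all $w\in C^1(U)$ with $\|w\|_{(|b|+|k|)}\le 1$, all $n\in\N$ and all $(b,k)$ with $|b|+|k|\ge 1$.
\end{proof}
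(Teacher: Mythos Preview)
Your proposal is correct and matches the paper's approach: the paper does not prove this statement but simply cites it as \cite[Theorem 2.7]{hodw}, after recording (just before the theorem) that $\mu$ is doubling by \cite[Theorem 8]{pes} and that the restrictions $\mu_j$ are doubling by \cite[Proposition 4.5]{hodw}. Your verification of hypotheses is slightly more expansive (you also invoke the non-lattice property from \cite[Corollary 6.2]{hodw}), but the substance is the same citation-after-hypothesis-check.
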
 
Using a standard argument (see \cite{dol,Na}) we can convert the bounds on the $\norm{\cdot}_{L^2(\nu)}$ norm to bounds for the modified $\norm{\cdot}_{(t)}$ norm. Then noting that $\norm{\cdot}_{C^1}\leq (|b|+|k|)\norm{\cdot}_{(|b|+|k|)}$ for $|b|+|k|\geq1$ we get the following corollary.
\begin{cor}\label{L2}
Suppose that the Julia set of $f$ is not contained in a circle in $\widehat \C$. Then, for any $\e>0$, there exist $C_\e>0$ and $\rho_\e\in(0,1)$ such that for all $b\in\mathbb R$ and all $k\in\mathbb Z$ with $|b|+|k|>1$ we have that
\[
\|\mathcal L_{F_{b,k}}^n\|_{C^1}\le C_\e  ( |b| + |k| )^{1+\e} \rho_\e^n,
\]
for all $n\in\N$. In particular, $\op{spr}(\mathcal L_{F_{b,k}})<\rho_\e<1.$

\end{cor}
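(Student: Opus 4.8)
The plan is to bootstrap the $L^2(\mu)$-decay of Theorem \ref{decayestimate} up to a $C^1$-decay, exactly in the spirit of Dolgopyat \cite{dol} and Naud \cite{Na}. Throughout, write $t:=|b|+|k|$ and assume $t>1$. Two ingredients are required: a Lasota--Yorke (Doeblin--Fortet) inequality for the iterates $\mathcal L^n_{F_{b,k}}$ in which the cost of one differentiation is at most linear in $t$, and an a priori comparison of $\norm{\cdot}_\infty$ with $\norm{\cdot}_{L^2(\mu)}$ that exploits the doubling of $\mu$ and of the $\mu_j$. For the first ingredient I would start from $\mathcal L^n_{F_{b,k}}w(x)=\sum_{g}e^{u^n(gx)}e^{iv^n(gx)}w(gx)$, the sum running over the admissible $n$-fold compositions $g$ of the local inverse branches, with $v=bR+k\theta$. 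On differentiating in $x$, the derivative lands on the positive weight $e^{u^n(gx)}$, giving a term bounded by bounded distortion and independent of $b,k$ (recall $u=\xi R+\log\psi-\log\psi\circ f-P(\xi R)$ with $\xi$ fixed); or on the oscillatory factor $e^{iv^n(gx)}$, giving a factor $\lesssim\norm{v'}_\infty\lesssim t$ after summing a geometric series of contracting Jacobians (since $v=bR+k\theta$, so $\norm{v'}_\infty\lesssim|b|+|k|=t$); or on $w$ itself, giving $\norm{w'}_\infty$ times $|(g)'(x)|\lesssim\theta_0^{\,n}$, where $\theta_0\in(0,1)$ is the hyperbolic contraction rate from (\ref{hyperbol}). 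Bounding the positive parts by means of $\mathcal L_u\mathbbm 1=\mathbbm 1$, this yields $C_1>0$ and $\theta_0\in(0,1)$, independent of $b,k$, with
\[
\norm{\bigl(\mathcal L^n_{F_{b,k}}w\bigr)'}_\infty\leq C_1\bigl(\theta_0^{\,n}\norm{w'}_\infty+t\,\norm{w}_\infty\bigr),\qquad n\geq1,
\]
while $\norm{\mathcal L^n_{F_{b,k}}w}_\infty\leq\norm{w}_\infty$ is automatic.

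For the second ingredient, the doubling property of $\mu$ yields a fixed $\eta\in(0,1)$ and a $C_2>0$ so that every $h\in C^1(U)$ satisfies
\[
\norm{h}_\infty\leq C_2\bigl(\norm{h'}_\infty^{\,\eta}\,\norm{h}_{L^2(\mu)}^{\,1-\eta}+\norm{h}_{L^2(\mu)}\bigr);
\]
this is the standard comparison used in \cite{dol,Na}: where $|h|$ stays within a factor $1/2$ of $\norm{h}_\infty$ on a ball, the $\mu$-mass of that ball is bounded below by a power of its radius, the exponent coming from the doubling constant. The mild subtlety that $\mu$ is carried by $\bigcup_j P_j\subsetneq U$ is handled as in \cite{hodw}: the value of $\mathcal L^n_{F_{b,k}}w$ on $U_j$ depends, up to an exponentially small error, only on its restriction to $P_j$, so the doubling of the $\mu_j$ on $P_j$ suffices.

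With both ingredients in hand I would run the bootstrap as follows. Interpolating the crude bound $\norm{(\mathcal L^n_{F_{b,k}}w)'}_\infty\leq 2C_1 t\,\norm{w}_{(t)}$ against $\norm{\mathcal L^n_{F_{b,k}}w}_{L^2(\mu)}\leq C\rho^n\norm{w}_{(t)}$ (Theorem \ref{decayestimate}, applicable since $t>1$) produces $\rho_1\in(0,1)$ with $\norm{\mathcal L^n_{F_{b,k}}w}_\infty\lesssim t^{\eta}\rho_1^{\,n}\norm{w}_{(t)}$; feeding this back into the Lasota--Yorke inequality applied to $\mathcal L^{2n}_{F_{b,k}}=\mathcal L^n_{F_{b,k}}\circ\mathcal L^n_{F_{b,k}}$ then controls $\norm{(\mathcal L^{2n}_{F_{b,k}}w)'}_\infty$, and a standard iteration of these two estimates --- as in \cite{dol,Na} --- yields, for every $\e>0$, constants $C_\e>0$ and $\rho_\e\in(0,1)$ with $\norm{\mathcal L^n_{F_{b,k}}w}_{(t)}\leq C_\e t^{\e}\rho_\e^{\,n}\norm{w}_{(t)}$ for all $n$. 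Since $\norm{\cdot}_{(t)}\leq\norm{\cdot}_{C^1}\leq t\,\norm{\cdot}_{(t)}$ for $t\geq1$, this converts into $\norm{\mathcal L^n_{F_{b,k}}}_{C^1}\leq C_\e t^{1+\e}\rho_\e^{\,n}$ for all $n$, which is the asserted bound. Finally, $t$ being fixed, $\op{spr}(\mathcal L_{F_{b,k}})=\lim_{n\to\infty}\norm{\mathcal L^n_{F_{b,k}}}_{C^1}^{1/n}\leq\rho_\e$, and replacing $\rho_\e$ by a slightly larger number in $(\rho_\e,1)$ gives the strict inequality as stated.

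The main obstacle --- everything else being routine or quoted --- is the Lasota--Yorke step: one must verify that differentiating an $n$-fold iterate of $\mathcal L_{F_{b,k}}$ costs only a single power of $|b|+|k|$, rather than a power growing with $n$, which is precisely where uniform hyperbolicity (\ref{hyperbol}) and bounded distortion of the branches $g_{ij}$ do the work. The only other point needing care is the mismatch between $\supp\mu$ and the domain $U$ in the doubling comparison, addressed above.
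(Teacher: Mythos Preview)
Your proposal is correct and follows essentially the same route as the paper. The paper itself gives no detailed proof of this corollary: it simply states that ``using a standard argument (see \cite{dol,Na}) we can convert the bounds on the $\norm{\cdot}_{L^2(\mu)}$ norm to bounds for the modified $\norm{\cdot}_{(t)}$ norm'' and then invokes the comparison $\norm{\cdot}_{C^1}\leq (|b|+|k|)\norm{\cdot}_{(|b|+|k|)}$ for $|b|+|k|\geq1$. What you have done is precisely to unpack that ``standard argument'' --- the Lasota--Yorke inequality with linear dependence in $t$, the doubling-based interpolation between $\norm{\cdot}_\infty$ and $\norm{\cdot}_{L^2(\mu)}$, and the resulting bootstrap --- which is exactly the Dolgopyat--Naud machinery the paper cites.
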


Now that we have established the required bounds on the $C^1$ norm of our transfer operators we proceed to bound the sums 
\[
Z_n(s,k):=\sum_{f^nx=x} e^{sR^n(x)+ ik \theta^n(x)} 
\]
for $s=\xi+ib$. This next result follows essentially from Ruelle's work in \cite{rue}, except that we require explicit 
dependence on $b$ and $k$. A proof can be found in the appendix 
of \cite{Na} without the dependence on $k\in\Z$, which appeared 
as Proposition 6.1 in \cite{hodw}. In the statement below,
$\chi_j$ is the characteristic function of $U_j$ for each $1\le j\le m$.
(Note that, 
since $U$ is the {\it disjoint} union of the sets $U_j$, for each such $j$ we have that $\chi_j\in C^1(U)$.)
\begin{proposition}\label{zn} Fix an arbitrary $b_0>0$. There exists $x_j\in P_j$, for $1\leq j\leq m$, such that for any $\eta>0$, there exists $C_\eta>0$ such that for all $n\ge 2$ and any $k\in\mathbb Z$
\[ \left| Z_n(\xi+ib,k)-\sum_{j=1}^{N} \mathcal L_{(\xi+ib,k)}^n (\chi_j)(x_j) \right| \le 
\\ C_\eta (|b|+|k|)   \sum_{p=2}^n \|\mathcal L_{(\xi+ib,k)}^{n-p}\|_{C^1} \left(\gamma^{-1} e^{\eta +P(\xi R)}\right)^p\]
for all $|b|+|k|>b_0$.
\end{proposition}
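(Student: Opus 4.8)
The plan is to follow Ruelle's method of comparing periodic orbit sums with transfer operators (cf. the appendix of \cite{Na} and \cite[Proposition 6.1]{hodw}), while keeping track of the dependence on $b$ and $k$. Write $\varphi=(\xi+ib)R+ik\theta\in C^1(U)$, so that $e^{\varphi^n(x)}=e^{(\xi+ib)R^n(x)+ik\theta^n(x)}$, $Z_n(\xi+ib,k)=\sum_{f^nx=x}e^{\varphi^n(x)}$, and $\mathcal L_{(\xi+ib,k)}=\mathcal L_\varphi$. First I would code periodic points by the Markov partition: every $x$ with $f^nx=x$ not lying on the grid $\Delta:=\bigcup_{m\ge0}f^{-m}(\bigcup_i\partial P_i)$ determines a cyclic admissible word $w=(j_0,\dots,j_{n-1})$ (so $M_{j_0j_1}=\dots=M_{j_{n-2}j_{n-1}}=M_{j_{n-1}j_0}=1$), and $x=z_w$ is the unique fixed point in $P_{j_0}$ of the contraction $G_w:=g_{j_0j_1}\circ\cdots\circ g_{j_{n-1}j_0}\colon U_{j_0}\to U_{j_0}$; by hyperbolicity \eqref{hyperbol} the composition of the last $n-\ell$ inverse branches is $C\gamma^{-(n-\ell)}$-Lipschitz. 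The grid $\Delta$ is a proper closed invariant set on which the pressure of $\xi R$ is strictly less than $P(\xi R)=H(\alpha)$, so the periodic points on it contribute a sum of strictly smaller exponential order, absorbed into the right-hand side; hence $Z_n(\xi+ib,k)=\sum_w e^{\varphi^n(z_w)}+(\text{lower order})$, the sum over cyclic admissible $n$-words.

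Next I would expand the operator side. Iterating the definition, $\mathcal L_\varphi^n\chi_j(x_j)=\sum_W e^{\varphi^n(g_Wx_j)}\chi_j(g_Wx_j)$, the sum over admissible length-$n$ words $W$ whose final symbol maps to $j$; here $g_Wx_j\in U_{W_0}$. Choosing each $x_j$ in $\mathrm{int}(P_j)\setminus\bigcup_{i\ne j}\overline{U_i}$ (possible by property (2) of the neighbourhoods), for all large $n$ only the cyclic words with $W_0=j$ contribute, with $g_Wx_j=G_Wx_j=:\widehat z_W$, while the remaining (``off-diagonal'') words have their rank-$n$ cylinder meeting $U_j$ only in a neighbourhood of $\bigcup_i\partial P_i$ and are handled exactly as the grid terms above. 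Thus $\sum_j\mathcal L_\varphi^n\chi_j(x_j)=\sum_w e^{\varphi^n(\widehat z_w)}+(\text{lower order})$, $\widehat z_w=G_wx_{w_0}$. Since $z_w$ and $\widehat z_w$ are the fixed point and the $G_w$-image of $x_{w_0}$ for the same contraction, they lie in a common cylinder of rank $n+1$, so $|f^\ell z_w-f^\ell\widehat z_w|\le C\gamma^{-(n-\ell)}$ for $0\le\ell\le n-1$; in particular $|R^n(z_w)-R^n(\widehat z_w)|,|\theta^n(z_w)-\theta^n(\widehat z_w)|\le C$. It remains to bound $\Sigma:=\sum_w\big(e^{\varphi^n(z_w)}-e^{\varphi^n(\widehat z_w)}\big)$.

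For this main estimate, the naive bound $|e^{\varphi^n(z_w)}-e^{\varphi^n(\widehat z_w)}|\le C(|b|+|k|)e^{\xi R^n(z_w)}$ (from $|e^u-1|\le|u|e^{|u|}$ and the contraction estimate) only gives $\Sigma=O\big((|b|+|k|)e^{nP(\xi R)}\big)$, which is far too weak; the difference must be kept under the summation sign long enough to expose the cancellation carried by the operators $\mathcal L_\varphi^{n-p}$. I would telescope over a depth parameter $2\le p\le n$: at depth $p$ one replaces $z_w$ and $\widehat z_w$ by a point canonical within the rank-$p$ data of $w$, and rewrites the increment in passing from depth $p-1$ to depth $p$ as $\mathcal L_\varphi^{n-p}$, evaluated at the base points $x_j$, applied to a remainder governed by the innermost $p$ inverse branches. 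Its size is then a product of four factors: (i) $\gamma^{-p}$, from contracting the mismatch between $z_w$ and its proxy along these $p$ branches, via \eqref{hyperbol}; (ii) $C_\eta e^{p(P(\xi R)+\eta)}$, bounding the sum over the innermost $p$ branches weighted by $e^{\xi R^p}$, via the Ruelle--Perron--Frobenius estimate $\|\mathcal L_{(\xi,0)}^p\mathbbm 1\|_\infty\le C_\eta e^{p(P(\xi R)+\eta)}$ for the real operator; (iii) $|b|+|k|$, from a single mean-value estimate using that $R,\theta$ are $C^1$ on $U$ with the bounded-distortion bounds $\|(R^p\circ g_W)'\|_\infty,\|(\theta^p\circ g_W)'\|_\infty\le C$; and (iv) $\|\mathcal L_\varphi^{n-p}\|_{C^1}$ for the outer $n-p$ branches, the $C^1$-norm being forced because the remainder carries a derivative of size $O(|b|+|k|)$. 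Summing $p=2,\dots,n$ gives the asserted inequality; the hypothesis $|b|+|k|>b_0$ only serves to keep the elementary estimates uniform.

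The delicate point, and the step I expect to be the main obstacle, is precisely (iii)--(iv) above: the depth-$p$ approximation must be organised so that the increment is \emph{genuinely} an iterate $\mathcal L_\varphi^{n-p}$ of the full complex transfer operator acting on a function whose $C^1$-norm is controlled by $C_\eta(|b|+|k|)\gamma^{-p}e^{p(P(\xi R)+\eta)}$; only then does the Dolgopyat-type decay of $\|\mathcal L_\varphi^{n-p}\|_{C^1}$ from Corollary \ref{L2} have anything to bite on. Bounding the increments by absolute values before factoring out $\mathcal L_\varphi^{n-p}$ destroys the cancellation and recovers nothing better than $O\big((|b|+|k|)e^{nP(\xi R)}\big)$. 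The remaining work — the grid and off-diagonal bookkeeping, and the geometric summation over $p$ — is routine.
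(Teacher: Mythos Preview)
The paper does not supply its own proof of this proposition; it only refers the reader to Ruelle \cite{rue}, the appendix of Naud \cite{Na}, and Proposition~6.1 of Oh--Winter \cite{hodw}. Your sketch follows precisely that route---coding periodic points by cyclic admissible words, comparing with $\mathcal L_\varphi^n\chi_j(x_j)$, and telescoping the discrepancy over a depth parameter $p$ so that the increment at depth $p$ is an honest $\mathcal L_\varphi^{n-p}$ applied to a function of $C^1$-norm $O\big((|b|+|k|)(\gamma^{-1}e^{\eta+P(\xi R)})^p\big)$---so it is exactly aligned with what the paper invokes.
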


We are now ready to prove the decay estimates that will give us the proof of Theorem \ref{main} in the next section. Fixing $\e>0$ then by Corollary \ref{L2} and Proposition \ref{zn}, we get that for all $|b|+|k|>1$,
\begin{align*}
   \left| Z_n(\xi+ib,k)\right| &\leq \left|  Z_n(\xi+ib,k)  - \sum_{j=1}^{N} \mathcal L_{(\xi+ib,k)}^n (\chi_j)(x_j)\right|+ N\,C_{\e} (|b|+|k|)^{1+\e}\left(\rho_\e e^{P(\xi)}\right)^n\\
   &\leq C_\eta C_\e (|b|+|k|)^{2+\e} \left(\rho_\e e^{P(\xi)}\right)^n  \sum_{p=2}^n \left(\frac{e^{\eta}}{\gamma \rho_\e} \right)^p + N\,C_{\e} (|b|+|k|)^{1+\e}\left(\rho_\e e^{P(\xi)}\right)^n
\end{align*} 
We note that it is possible to choose $1>\rho_\e>1/\gamma$ (recall that $\gamma$ is the expansion rate given in (\ref{hyperbol})). Provided $\eta$ is small enough such that $e^{\eta}/\gamma \rho_\e<1$ we get that for some $C>0$
\begin{align}\label{zndecay}
   \left| Z_n(\xi+ib,k)\right| \leq C \left(|b|+|k|\right)^{2+\e} \left(\rho_\e e^{P(\xi R)}\right)^n.
\end{align}

Finally, we will also need a more elementary result to bound the sums $Z_n(\xi+ib,0)$ for small $b\in \R$. These estimates can be derived as in the symbolic case in \cite{papo}. 
\begin{lemma}\label{decay}
Let $K\subset\R$ be a compact set. There exists $\varepsilon>0$ such that for each $n\in\mathbb N$ and some $\beta\in(0,1)$ we have that
\begin{enumerate}
        \item for $b\in K\setminus(-\varepsilon,\varepsilon)$ we can bound $Z_n(\xi+ib,0)=O(\beta^ne^{H(\alpha)n})$ and
        \item for $b\in(-\varepsilon,\varepsilon)$ we have\[
    Z_n(\xi+ib,0) =e^{nP((\xi+ib)R)}+O(\beta^{n}e^{H(\alpha)n}).
    \]
\end{enumerate}
\end{lemma}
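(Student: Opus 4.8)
The plan is to reduce both statements to the spectral theory of the one-parameter family $\mathcal L_{(\xi+ib,0)} = \mathcal L_{(\xi+ib)R}$ acting on $C^1(U)$, together with the trace-like estimate of Proposition \ref{zn}. Recall from Proposition \ref{zn} (applied with $k=0$) that
\[
Z_n(\xi+ib,0) = \sum_{j=1}^N \mathcal L_{(\xi+ib,0)}^n(\chi_j)(x_j) + E_n(b),
\]
where the error term $E_n(b)$ is controlled by $C_\eta |b| \sum_{p=2}^n \|\mathcal L_{(\xi+ib,0)}^{n-p}\|_{C^1}(\gamma^{-1}e^{\eta+P(\xi R)})^p$. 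The strategy is: for $b$ bounded away from $0$ use the uniform spectral gap to see that both the main sum and the error are exponentially smaller than $e^{H(\alpha)n} = e^{P(\xi R)n}$, giving (1); for $b$ near $0$ use the perturbation theory of the Ruelle--Perron--Frobenius theorem (Theorem \ref{thm_RPF}) to extract the leading eigenvalue $e^{P((\xi+ib)R)}$, giving (2).

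For part (1), fix the compact set $K$ and $b \in K \setminus (-\varepsilon,\varepsilon)$. Since $R$ is not cohomologous to a constant (as $J$ is not in a circle), standard arguments show that for each such $b$ the operator $\mathcal L_{(\xi+ib)R}$ has spectral radius strictly less than $e^{P(\xi R)}$; crucially, by compactness of $K \setminus (-\varepsilon,\varepsilon)$ and continuity of the spectral radius in $b$, this can be made uniform: there is $\beta_0 \in (0,1)$ with $\|\mathcal L_{(\xi+ib,0)}^n\|_{C^1} = O(\beta_0^n e^{P(\xi R)n})$ uniformly in $b \in K\setminus(-\varepsilon,\varepsilon)$. (This is the analogue for the finite range $K$ of Corollary \ref{L2}, which only covered $|b| > 1$; for the compact piece $[\varepsilon, 1]$ one argues directly as in \cite{papo}.) Feeding this into the displayed identity, the main sum is $O(\beta_0^n e^{P(\xi R)n})$, and in $E_n(b)$ one bounds $\|\mathcal L^{n-p}\|_{C^1} = O(\beta_0^{n-p}e^{P(\xi R)(n-p)})$, so each summand is $O(\beta_0^{n-p}e^{P(\xi R)n}(\gamma^{-1}e^{\eta})^p)$; choosing $\eta$ small and noting $\beta_0 < 1$, the sum over $p$ is $O((\max\{\beta_0, \gamma^{-1}e^{\eta}\beta_0^{-1} \cdot \beta_0\})^n) \cdot$ harmless factors, which is $O(\beta^n e^{P(\xi R)n})$ for a suitable $\beta \in (0,1)$. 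Since $P(\xi R) = H(\alpha)$, this is exactly claim (1).

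For part (2), take $b \in (-\varepsilon,\varepsilon)$ with $\varepsilon$ from Lemma \ref{pressure}. By analytic perturbation theory (Kato), for $|b|$ small the operator $\mathcal L_{(\xi+ib,0)}$ has a simple leading eigenvalue $\lambda(b)$ depending analytically on $b$ with $\lambda(0) = e^{P(\xi R)}$ and, more generally, $\lambda(b) = e^{P((\xi+ib)R)}$, with a spectral projection $\Pi_b$ and complementary part of spectral radius bounded by $\beta_1 e^{P(\xi R)}$ uniformly for $|b| \le \varepsilon$ (after shrinking $\varepsilon$). Hence $\mathcal L_{(\xi+ib,0)}^n = \lambda(b)^n \Pi_b + Q_b^n$ with $\|Q_b^n\|_{C^1} = O(\beta_1^n e^{P(\xi R)n})$ uniformly. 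Writing $c(b) := \sum_{j=1}^N (\Pi_b \chi_j)(x_j)$, which is analytic in $b$ with $c(0)$ computable (and nonzero — in fact one expects the normalisation to give $c(0)=1$, matching the leading term $Z_n(\xi,0) = \#\{f^nx=x\} \cdot(\text{corrections})$; more carefully the Ruelle structure gives $\sum_j (\Pi_0\chi_j)(x_j) = 1$), the displayed identity becomes
\[
Z_n(\xi+ib,0) = c(b)\,e^{nP((\xi+ib)R)} + O(\beta_1^n e^{P(\xi R)n}) + E_n(b),
\]
and $E_n(b)$ is estimated exactly as in part (1) to be $O(\beta^n e^{P(\xi R)n})$. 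The remaining point is that the statement of (2) has coefficient exactly $1$, not $c(b)$; this requires checking that the normalisation chosen in \cite{papo} (where $\mathcal L_u \mathbbm 1 = \mathbbm 1$ after conjugating) forces $c(b) \to 1$, or more precisely that the discrepancy $|c(b)-1|$ is absorbed: since $|e^{nP((\xi+ib)R)}| = e^{nP(\xi R)}\cdot|1 - \sigma_\alpha^2 b^2/2 + O(|b|^3)|^n$ by Lemma \ref{pressure}, for fixed small $b\ne 0$ this is already exponentially smaller than $e^{nP(\xi R)}$, so any bounded multiplicative constant is harmless — one should simply absorb $(c(b)-1)e^{nP((\xi+ib)R)}$ and also, if $c(0)=1$ and $c$ is analytic, write $c(b)e^{nP((\xi+ib)R)} = e^{nP((\xi+ib)R)} + (c(b)-1)e^{nP((\xi+ib)R)}$ and bound the latter by $O(|b| e^{nP(\xi R)}(1-\sigma_\alpha^2b^2/2+\cdots)^n)$, absorbing it into the error after possibly shrinking $\varepsilon$ and enlarging $\beta$.

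The main obstacle is the uniformity of the spectral estimates in $b$ over the compact range $K$ in part (1) — Corollary \ref{L2} only gives a $b$-uniform bound for $|b|>1$, and the complementary compact range $[\varepsilon,\sup|K|]$ must be handled separately, relying on the non-lattice / not-cohomologous-to-a-constant property of $R$ plus compactness to upgrade pointwise spectral-radius bounds to a uniform one; this is routine (it is exactly the symbolic argument of \cite{papo}) but is the only non-mechanical ingredient. Everything else — the perturbation expansion near $b=0$, plugging the operator norm bounds into Proposition \ref{zn}, and identifying the leading coefficient via the RPF normalisation — is bookkeeping.
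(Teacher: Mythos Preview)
Your treatment of part (1) is essentially correct and matches the paper: the non-lattice property of $R$ gives $\mathrm{spr}(\mathcal L_{(\xi+ib,0)}) < e^{P(\xi R)}$ for each $b \ne 0$, compactness (with upper semi-continuity of the spectral radius, not continuity) upgrades this to a uniform bound on $K\setminus(-\varepsilon,\varepsilon)$, and Proposition~\ref{zn} converts this into the bound on $Z_n$.

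Part (2), however, has a genuine gap. You assert that ``$E_n(b)$ is estimated exactly as in part (1) to be $O(\beta^n e^{P(\xi R)n})$'', but that estimate relied on $\|\mathcal L_{(\xi+ib,0)}^{m}\|_{C^1} = O(\beta_0^{m} e^{mP(\xi R)})$ with $\beta_0<1$, which is precisely what fails for small $|b|$: the leading eigenvalue has modulus close to $e^{P(\xi R)}$, so one only has $\|\mathcal L^{m}\|_{C^1}=O(e^{mP(\xi R)})$. Substituting this into the bound of Proposition~\ref{zn} yields merely $E_n(b)=O(|b|\,e^{nP(\xi R)})$, the same order as the main term rather than exponentially smaller. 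The coefficient issue you flag is a symptom of the same difficulty: $c(0)=\sum_j \nu(P_j)\psi(x_j)$ is only a Riemann-sum approximation to $\int\psi\,d\nu=1$ and depends on the partition and the choice of the points $x_j$; your absorption argument for $(c(b)-1)e^{nP((\xi+ib)R)}$ again produces at best an $O(|b|\,e^{nH(\alpha)})$ correction, not $O(\beta^n e^{nH(\alpha)})$. In short, Proposition~\ref{zn} is too coarse near $b=0$ to deliver (2) as stated.

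The paper's route for (2) sidesteps Proposition~\ref{zn} entirely and works, as it says, ``as in the symbolic case in \cite{papo}'': in the subshift of finite type coming from the Markov partition one has an \emph{exact} trace formula $Z_n^{\mathrm{symb}}(s)=\mathrm{tr}(\mathcal L_{s\tilde R}^n)$, so the uniform spectral gap for $|b|\le\varepsilon$ gives $Z_n^{\mathrm{symb}}(\xi+ib)=e^{nP((\xi+ib)R)}+O(\beta_1^n e^{nH(\alpha)})$ with coefficient exactly $1$ (simple leading eigenvalue). Passing from $Z_n^{\mathrm{symb}}$ back to $Z_n$ costs only the over-counting of periodic orbits meeting the partition boundary, which contributes $O(\beta_2^n e^{nH(\alpha)})$ since the boundary carries strictly smaller pressure.
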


\begin{proof}
For part $(1)$, we use the fact that, since $R$ is non-lattice 
(see Remark \ref{intro_non-lattice}) we have that $\op{spr}(\mathcal L_{(\xi+ib,0)})<e^{P(\xi R)}$ for $b \ne 0$,
with a uniform bound on $K \setminus (-\varepsilon,\varepsilon)$,
and Proposition \ref{zn}. 
Part $(2)$ follows from the spectral gap in the Ruelle--Perron--Frobenius theorem, which is uniform over 
an interval $(-\varepsilon,\varepsilon)$. 
\end{proof}

\section{Proof of Theorem \ref{main}} 
Throughout this section we fix a hyperbolic rational map $f : \widehat{\mathbb{C}} \rightarrow \widehat{\mathbb{C}}$ of degree at least $2$. We suppose that its Julia set is not contained inside a circle in $\widehat \C$ and we fix $\alpha$ a constant in the interior of $\mathcal I_f$. We set $\xi=\xi(\alpha)$ to be the unique real number given by Lemma \ref{equil}. Let $K \subset \R$ be a compact set, 
let $(I_n)_{n=1}^\infty$ be a sequence of intervals 
in $K$ and let $(S_n)_{n=1}^\infty$ be a sequence of arcs in $\mathbb{S}^1$. For convenience we parametirise $\mathbb \R/\Z$ as $\left[-\frac{1}{2},\frac{1}{2}\right]$ and assume that the sequence of arcs $(S_n)_{n=1}^\infty$ is contained inside a fixed reference arc $S=\left[-\frac{\k}{2},\frac{\k}{2}\right]$ of length $\k<1$.

For each $n\in\N$ we denote by $p_n$ the midpoint of the interval $I_n$ and by $\vartheta_n$ the midpoint of the arc $S_n$. Denote also their lengths by $\ell_n=\ell(I_n)$ and $\k_n=\nu(S_n)$. Furthermore, suppose that
$(\ell_n^{-1})_{n=1}^{\infty}$ and $(\kappa_n^{-1})_{n=1}^{\infty}$ have sub-exponential growth. Then we can write
\begin{align*}
 \pi(n,\alpha,I_n,S_n)=&\sum_{\tau\in\P_n}\mathbbm{1}_{I_n}\left(\log|\lambda(\tau)|-n\alpha\right)\mathbbm{1}_{S_n}\left( \hat \lambda(\tau)\right)\\
 =&\sum_{\tau\in\P_n}\mathbbm{1}_{\left[-\frac{1}{2},\frac{1}{2}\right]}\left(\ell_n^{-1}\left(\log|\lambda(\tau)|-n\alpha-p_n\right)\right)\mathbbm{1}_{\left[-\frac{\k}{2},\frac{\k}{2}\right]}\left(\frac{\k}{\k_n}\left(\hat \lambda(\tau) -\vartheta_n\right)\right)
\end{align*}

\subsection{Some auxiliary estimates}
\noindent We fix $\phi\in C^{4}(\R,\R_{\geq0})$ compactly supported and $\psi\in C^{4}(\mathbb S^1,\R_{\geq0})$ and consider the auxiliary counting number: \[ \pi_{\phi,\psi}(n):=\sum_{\tau\in\P_n}\phi\left(\ell^{-1}_n\left(\log|\lambda(\tau)|-n\alpha-p_n\right)\right)\psi\left(\frac{\k}{\k_n}\left(\hat \lambda(\tau)-\vartheta_n\right)\right).
\]

We study the asymptotic behaviour of $\pi_{\phi,\psi}$ to infer our result using an approximation argument in the next subsection.

We begin by changing the summation over $\P_n$, that is primitive periodic orbits of length $n$, to a sum over the set of fixed points of the iterated map $f^n$. Clearly, a primitive periodic orbit corresponds to $n$ distinct points in this set. However this set also contains points belonging in primitive periodic orbits of shorter lengths. In the following lemma we bound the error from these shorter primitive orbits.
\begin{lemma}\label{sumchange}For all $\eta>0$ we have that 
\begin{align*}
    \pi_{\phi,\psi}(n)=\frac{1}{n} \sum_{f^nx=x}\phi\left(\ell_n^{-1}\left(R^n(x)-p_n\right)\right)\psi\left(\frac{\k}{\k_n}(\theta^n(x)-\vartheta_n)\right)+O\left(e^{(H(\alpha)+\eta)n/2}\right)
\end{align*}
\end{lemma}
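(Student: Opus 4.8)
\emph{Plan of proof.} The plan is to replace the sum over primitive periodic orbits in the definition of $\pi_{\phi,\psi}(n)$ by a sum over all periodic points of $f^{n}$, and then to bound the discrepancy, which is carried entirely by those periodic points lying on primitive orbits of period a \emph{proper} divisor of $n$. Two facts drive the argument: $R^{n}$ and $\theta^{n}$ are constant along any periodic orbit, and the compact support of $\phi$ sharply restricts which short orbits can contribute at all.

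First I would record the bookkeeping identity. For $\tau=\{z,fz,\dots,f^{n-1}z\}\in\mathcal P_{n}$ one has $\log|\lambda(\tau)|-n\alpha=r^{n}(z)-n\alpha=R^{n}(z)$ and $\hat\lambda(\tau)=e^{i\theta^{n}(z)}$, and since $R^{n},\theta^{n}$ are orbit-constant, each of the $n$ points of $\tau$, viewed as a fixed point of $f^{n}$, contributes exactly the $\tau$-summand of $\pi_{\phi,\psi}(n)$ (under the identification $\mathbb S^{1}\cong\R/2\pi\Z$ the angle $\theta^{n}(z)$ corresponds to $\hat\lambda(\tau)$). More generally, if $m\mid n$ with $m<n$ and $\tau'\in\mathcal P_{m}$, then each of its $m$ points $y$ satisfies $f^{n}y=y$ with $R^{n}(y)=\tfrac{n}{m}R^{m}(y)$ and $\theta^{n}(y)=\tfrac{n}{m}\theta^{m}(y)$ (modulo $2\pi$). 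Partitioning $\{x:f^{n}x=x\}$ by the period of the primitive orbit through $x$ therefore gives
\[
\frac1n\sum_{f^{n}x=x}\phi\!\left(\ell_n^{-1}(R^{n}(x)-p_n)\right)\psi\!\left(\tfrac{\k}{\k_n}(\theta^{n}(x)-\vartheta_n)\right)=\pi_{\phi,\psi}(n)+E_n,
\]
where
\[
E_n=\frac1n\sum_{\substack{m\mid n\\ m<n}}m\sum_{\tau'\in\mathcal P_{m}}\phi\!\left(\ell_n^{-1}\!\left(\tfrac nm R^{m}(y_{\tau'})-p_n\right)\right)\psi\!\left(\tfrac{\k}{\k_n}\!\left(\tfrac nm\theta^{m}(y_{\tau'})-\vartheta_n\right)\right),
\]
$y_{\tau'}$ being any point of $\tau'$; it remains to show $E_n=O(e^{(H(\alpha)+\eta)n/2})$.

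The key estimate exploits the compact support of $\phi$. Say $\supp\phi\subseteq[-A,A]$; since $I_n\subseteq K$ we have $\ell_n\le\mathrm{diam}(K)$ and $p_n\in K$, so the $\tau'$-summand vanishes unless $\tfrac nm R^{m}(y_{\tau'})$ lies in the fixed compact set $K'':=[\min K-A\,\mathrm{diam}(K),\,\max K+A\,\mathrm{diam}(K)]$, equivalently unless $|R^{m}(y_{\tau'})|\le\tfrac mn C$ with $C:=\sup_{w\in K''}|w|$. Bounding $\phi,\psi$ by their sup-norms and using that the $m$ points of $\tau'$ are distinct fixed points of $f^{m}$ all realising this bound, the $m$-th inner sum is at most $\|\phi\|_\infty\|\psi\|_\infty\,\#\{x:f^{m}x=x,\ |R^{m}(x)|\le\tfrac mn C\}$. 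Now the elementary Chebyshev-type inequality
\[
\#\{x:f^{m}x=x,\ |R^{m}(x)|\le\tfrac mn C\}\le e^{|\xi|C}\!\!\sum_{f^{m}x=x}\!\!e^{\xi R^{m}(x)}=e^{|\xi|C}Z_m(\xi,0)
\]
(valid because $\tfrac mn C\le C$ and every summand is positive), combined with $Z_m(\xi,0)=e^{mP(\xi R)}\bigl(1+O(\beta^{m})\bigr)\le C_1 e^{mH(\alpha)}$ from Lemma \ref{decay}(2) at $b=0$ and the identity $P(\xi R)=H(\alpha)$, yields
\[
|E_n|\le\frac{\|\phi\|_\infty\|\psi\|_\infty\,e^{|\xi|C}C_1}{n}\sum_{\substack{m\mid n\\ m<n}}e^{mH(\alpha)}=\frac{C_2}{n}\sum_{\substack{m\mid n\\ m<n}}e^{mH(\alpha)}.
\]
As every proper divisor of $n$ is at most $n/2$, the last sum is $O(ne^{H(\alpha)n/2})$, so $|E_n|=O(e^{H(\alpha)n/2})$; in particular $|E_n|=O(e^{(H(\alpha)+\eta)n/2})$ for every $\eta>0$, which is the claim.

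The one place where care is genuinely needed is this final estimate: the trivial count $m\cdot\#\mathcal P_{m}\le d^{m}$ would only give $|E_n|=O(d^{n/2})=O(e^{h(f)n/2})$, which is too weak since $h(f)=\log d$ may strictly exceed $H(\alpha)$. What rescues the bound is that the compact support of $\phi$ forces $\tfrac1m r^{m}(y_{\tau'})$ to lie within $O(1/n)$ of $\alpha$, and, since $H(\alpha)=P(\xi R)=\min_{s\in\R}P(sR)$, the Ruelle--Perron--Frobenius asymptotics for $Z_m(\xi,0)$ cap the number of such periodic points by a uniform multiple of $e^{mH(\alpha)}$; everything else is routine.
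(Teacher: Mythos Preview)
Your proof is correct and follows essentially the same approach as the paper's: both isolate the contribution of non-primitive periodic points, use the compact support of $\phi$ together with $R^{d}(x)=\tfrac{d}{n}R^{n}(x)$ to see that $e^{\xi R^{d}(x)}$ is bounded below on the relevant set, and then dominate the error by $\tfrac{1}{n}\sum_{d\le n/2}Z_d(\xi,0)$ via Lemma~\ref{decay}. Your Chebyshev-type inequality is exactly the paper's ``multiply and divide by $e^{\xi R^{d}(x)}$'' step, phrased more explicitly; indeed your version yields $|E_n|=O(e^{H(\alpha)n/2})$ directly, so the $\eta$ in the statement is not actually needed.
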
 
\begin{proof}
Call a fixed point $x$ of the iterated map $f^n$ non-primitive when there exists $d$, a proper divisor of $n$, such that $f^dx=x$. We can then get the following bound,
\begin{flalign*}
\frac{1}{n}& \sum_{f^nx=x}\phi\left(\ell_n^{-1}\left(R^n(x)-p_n\right)\right)\psi\left(\frac{\k}{\k_n}(\theta^n(x)-\vartheta_n)\right)-\pi_{\phi,\psi}(n)\\
=\frac{1}{n}& \sum_{\substack{f^nx=x\\\text{non-primitive}}} \phi\left(\ell_n^{-1}\left(R^n(x)-p_n\right)\right)\psi\left(\frac{\k}{\k_n}(\theta^n(x)-\vartheta_n)\right)\\
=O&\Bigg(\frac{\norm{\psi}_\infty}{n} \sum_{\substack{d|n\\ d\leq n/2}} \sum_{f^dx=x}  \phi\left(\ell_n^{-1}(R^n(x)-p_n)\right)\Bigg)
=O\Bigg(\frac{1}{n} \sum_{d\leq n/2} \sum_{f^{d}x=x}  \frac{\phi\left(\ell_{n}^{-1}(R^n(x)-p_n\right)}{e^{\xi R^d(x)}} e^{\xi R^d(x)}\Bigg)
\end{flalign*}
We are only interested in periodic points which satisfy $\ell_n^{-1}\left(R^n(x)-p_n\right)\in\supp \phi$ that is $R^n(x)\in p_n+\ell_n \supp\phi$. 
Recalling that the intervals $I_n$ were chosen inside a compact set $K$ we conclude that for such a periodic point the absolute value of $R^n(x)$ is bounded. 
Therefore for a non-primitive periodic point $x$, satisfying $f^{d}x=x$ for $d$ as above, we get that $R^d(x)=\frac{d}{n}R^n(x)$ and thus $e^{\xi R^d(x)}$ is bounded from below. From this we conclude using Lemma \ref{decay} that for any $\eta>0$,
\begin{align*}
   &\frac{1}{n} \sum_{d\leq n/2} \sum_{f^{d}x=x}  \frac{\phi\left(\ell_{n}^{-1}(R^n(x)-p_n\right)}{e^{\xi R^d(x)}} e^{\xi R^d(x)} =O\Bigg(\frac{\norm{\phi}_\infty }{n} \sum_{d\leq n/2}  \sum_{f^dx=x}  e^{\xi R^d(x)}\Bigg)  =O\Bigg(\frac{\norm{\phi}_\infty }{n} \sum_{d\leq n/2} Z_d(\xi,0)\Bigg) \\
   =&O\Bigg(\frac{1}{n} \sum_{d\leq n/2} e^{\big(P(\xi R)+\eta\big)d}\Bigg)=O\left(e^{\big(H(\alpha)+\eta\big)n/2}\right).
\end{align*}

\end{proof}

Consequently, we define 
\begin{align}
    \tilde\pi_{\phi,\psi}(n)=\frac{1}{n} \sum_{f^nx=x}\phi\left(\ell_n^{-1}\left(R^n(x)-p_n\right)\right)\psi\left(\frac{\k}{\k_n}(\theta^n(x)-\vartheta_n)\right).
\end{align}

Setting
\[
\phi_{n}(x):=\phi(\ell_n\inv(x-p_n))e^{-\xi (x-p_n)}
\]
we note that $\phi_{n}\in C^4(\R,\R_{\geq0})$ and is also compactly supported. Similarly, set 
\[
\psi_n(x):=\psi\left(\frac{\k}{\k_n}(x-\vartheta_n)\right).
\] 
In this notation we have that \[
\tilde \pi_{\phi,\psi}=\frac{1}{n}\sum_{f^nx=x}\phi_n\left(R^n(x)\right)\psi_n\left(\theta^n(x)\right)e^{\xi \left(R^n(x)-p_n\right)}.
\]
\begin{proposition}\label{aux}
\[\tilde \pi_{\phi,\psi}\sim e^{-\xi p_n}\frac{\int \phi_n\,\int\psi_n}{\sigma_\alpha\sqrt{2\pi}}\frac{e^{H(\alpha)n}}{n^{3/2}}\;\;\;\;\text{as $n\ra\infty$}.
\]
\end{proposition}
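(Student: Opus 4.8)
The plan is to pass to Fourier variables, rewriting $\tilde\pi_{\phi,\psi}(n)$ in terms of the dynamical sums $Z_n(\xi+ib,k)$, and then to extract the leading term from the contribution of the trivial frequency $k=0$ with $b$ close to $0$ by a saddle-point (Laplace) argument built on Lemma \ref{pressure}, showing that every other contribution is exponentially smaller by means of the decay estimates (\ref{zndecay}) and Lemma \ref{decay}.

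First I would expand $\phi_n$ by Fourier inversion on $\R$ and $\psi_n$ by Fourier series on $\mathbb S^1$. Writing $\widehat\phi_n(b)=\frac1{2\pi}\int_\R\phi_n(t)e^{-ibt}\,dt$ and $c_k(\psi_n)=\frac1{2\pi}\int_0^{2\pi}\psi_n(\theta)e^{-ik\theta}\,d\theta$, and recalling $Z_n(\xi+ib,k)=\sum_{f^nx=x}e^{(\xi+ib)R^n(x)+ik\theta^n(x)}$, this gives, for each fixed $n$,
\[
\tilde\pi_{\phi,\psi}(n)=\frac{e^{-\xi p_n}}{n}\int_\R\sum_{k\in\Z}\widehat\phi_n(b)\,c_k(\psi_n)\,Z_n(\xi+ib,k)\,db ,
\]
the interchange being legitimate for fixed $n$ by the rapid decay of $\widehat\phi_n$ and $(c_k(\psi_n))_k$ (as $\phi,\psi\in C^4$) together with the bounds of the previous section. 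It is useful to record the scaling identity $\widehat\phi_n(b)=\ell_n e^{-ibp_n}\widehat{g_n}(\ell_n b)$, where $g_n(y):=\phi(y)e^{-\xi\ell_n y}$; since $\phi$ has fixed compact support and $\ell_n$ stays bounded, the family $(g_n)$ is bounded in $C^4$, so $|\widehat{g_n}(\zeta)|\le C(1+|\zeta|)^{-4}$ uniformly, which yields $\|\widehat\phi_n\|_{L^1(\R)}\le C$, $\widehat\phi_n(0)=\frac1{2\pi}\int\phi_n$ bounded below by a constant times $\ell_n$, and $\int_\R(1+|b|)^{2+\e}|\widehat\phi_n(b)|\,db=O(\ell_n^{-(2+\e)})$; likewise $c_0(\psi_n)=\int\psi_n$, $\sum_k|c_k(\psi_n)|\le C$, and $\sum_k(1+|k|)^{2+\e}|c_k(\psi_n)|=O(\kappa_n^{-(2+\e)})$. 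By the sub-exponential hypotheses on $\ell_n^{-1}$ and $\kappa_n^{-1}$, the last two quantities are $e^{o(n)}$.

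Next I would split into the pieces (i) $k=0$, $|b|<\e$; (ii) $k=0$, $|b|\ge\e$; and (iii) $k\ne0$. For (iii), (\ref{zndecay}) applies since $|b|+|k|>1$, and bounding $(|b|+|k|)^{2+\e}\le 2^{2+\e}(1+|b|)^{2+\e}(1+|k|)^{2+\e}$ the total is $O\big(e^{o(n)}\rho_\e^n e^{H(\alpha)n}/n\big)$; for (ii) one uses part (1) of Lemma \ref{decay} on $\e\le|b|\le1$ and (\ref{zndecay}) on $|b|>1$, with total $O\big(e^{o(n)}\rho_\e^n e^{H(\alpha)n}/n\big)+O\big(\beta^n e^{H(\alpha)n}/n\big)$. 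Since $\rho_\e,\beta<1$, and $\int\phi_n$ and $\int\psi_n$ are bounded below by constant multiples of $\ell_n$ and $\kappa_n$ while $e^{-\xi p_n}$ is bounded below, all of these are $o\big(e^{-\xi p_n}\int\phi_n\int\psi_n\,e^{H(\alpha)n}/n^{3/2}\big)$, hence negligible. For (i), part (2) of Lemma \ref{decay} replaces $Z_n(\xi+ib,0)$ by $e^{nP((\xi+ib)R)}$ up to $O(\beta^ne^{H(\alpha)n})$, and Lemma \ref{pressure} gives $e^{nP((\xi+ib)R)}=e^{H(\alpha)n}\big(1-\tfrac12\sigma_\alpha^2b^2+O(|b|^3)\big)^n$ on $|b|\le\e$ (after shrinking $\e$ so the quadratic term dominates the cubic). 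Splitting once more at $|b|=n^{-2/5}$: on $n^{-2/5}<|b|<\e$ the $n$-th power is $\le e^{-cn^{1/5}}$, a negligible contribution; on $|b|\le n^{-2/5}$ the change of variables $b=u/\sqrt n$ gives
\[
\int_{|b|\le n^{-2/5}}\widehat\phi_n(b)\,e^{nP((\xi+ib)R)}\,db=\frac{e^{H(\alpha)n}}{\sqrt n}\int_{|u|\le n^{1/10}}\widehat\phi_n\!\left(\tfrac{u}{\sqrt n}\right)\Big(1-\tfrac{\sigma_\alpha^2u^2}{2n}+O\big(|u|^3n^{-3/2}\big)\Big)^n \,du ,
\]
and since $\widehat\phi_n(u/\sqrt n)=\widehat\phi_n(0)(1+o(1))$ uniformly on that range (using the uniform Lipschitz bound on $\widehat{g_n}$ and $p_n\in K$) while the remaining factor tends to $e^{-\sigma_\alpha^2u^2/2}$ with the integrable dominating function $e^{-\sigma_\alpha^2u^2/4}$, dominated convergence yields $\widehat\phi_n(0)\,\frac{e^{H(\alpha)n}}{\sqrt n}\big(\sqrt{2\pi}/\sigma_\alpha+o(1)\big)$. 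Assembling the one surviving term and inserting $c_0(\psi_n)=\int\psi_n$ and $\widehat\phi_n(0)=\frac1{2\pi}\int\phi_n$ reproduces exactly $\tilde\pi_{\phi,\psi}(n)\sim e^{-\xi p_n}\frac{\int\phi_n\int\psi_n}{\sigma_\alpha\sqrt{2\pi}}\frac{e^{H(\alpha)n}}{n^{3/2}}$.

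The routine parts I am suppressing are the precise constants in the scaling identities and the dominated-convergence bookkeeping in the saddle-point step. The main obstacle is the shrinking case: one must track the $n$-dependence of $\widehat\phi_n$ and $c_k(\psi_n)$ carefully enough that the polynomial losses in $(|b|,|k|)$ coming from (\ref{zndecay}) are beaten by the exponential gain $\rho_\e^n$ — this is precisely where the sub-exponential growth of $\ell_n^{-1}$ and $\kappa_n^{-1}$ is used — and that the Gaussian of moving width in the Laplace step still delivers the constant $\sqrt{2\pi}/\sigma_\alpha$ in the limit. No dynamical input is needed beyond Lemma \ref{pressure} and the Dolgopyat-type bounds already established.
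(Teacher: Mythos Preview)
Your proposal is correct and follows essentially the same route as the paper: Fourier inversion in $b$ and Fourier expansion in $k$ to rewrite $\tilde\pi_{\phi,\psi}(n)$ as an integral/sum of $\widehat\phi_n(b)\,c_k(\psi_n)\,Z_n(\xi+ib,k)$, followed by the same three-way split (small $b$ with $k=0$, the complementary $k=0$ range, and $k\neq 0$) handled respectively by Lemma~\ref{pressure} plus Lemma~\ref{decay}(2), by Lemma~\ref{decay}(1) together with (\ref{zndecay}), and by (\ref{zndecay}). The only cosmetic differences are that the paper rescales $b=t/(\sigma_\alpha\sqrt n)$ at the outset and then applies dominated convergence on the whole window $|t|\le\varepsilon\sigma_\alpha\sqrt n$, whereas you keep the original variable and make the Laplace step explicit via the auxiliary cut at $|b|=n^{-2/5}$; this extra cut makes the shrinking case (where $\int\phi_n,\int\psi_n\to 0$) a little more transparent, but the dynamical input and the structure of the argument are identical.
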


To prove this proposition we consider \[
A(n):=\left | \frac{e^{\xi p_n}\sigma_\alpha\sqrt{2\pi n^3}}{ e^{H(\alpha)n}}\tilde \pi_{\phi,\psi} -\int_{\R} \phi_n\int_{\mathbb{S}^1} \psi_n \right |
\]
and show that $A(n)\rightarrow 0$ as $n\rightarrow\infty$. The following proposition provides us with an initial bound. Using Fourier inversion and Fourier expansion we get,
\begin{align}
    \phi_{n}(x)e^{\xi (x-p_n)}&= e^{-\xi p_n}\int_{-\infty}^\infty \hat\phi_{n}(t)e^{\left(\xi+2\pi it\right)x}\,dt\,\,\,\;\text{\; and \; }\;\,\,\label{fou1}\\
     \psi_n(x)&=\sum_{k\in\mathbb Z} c_{n,k}\, e^{2\pi ikx}.\label{fou2}
\end{align}

\begin{proposition}\label{a123}
\begin{align*}
    A(n)\leq \frac{1}{\sqrt{2\pi} } \int_{-\infty}^{\infty} \left | \sum_{k\in\mathbb Z} \frac{c_{n,k}}{e^{H(\alpha)n}} \, \hat\phi_{n}\left(\frac{t}{2\pi\sigma_\alpha\sqrt{n}}\right)Z_n\left(\xi+\frac{it}{\sigma_\alpha\sqrt{n}},k\right)- e^{-\frac{t^2}{2}}\int_{\R} \phi_{n}\int_{\mathbb{S}^1} \psi_n\right| \,dt .
\end{align*}
\end{proposition}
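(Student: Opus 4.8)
The plan is to start from the expression for $\tilde\pi_{\phi,\psi}$ in terms of $Z_n$ and manipulate it into the quantity bounded by $A(n)$. First I would substitute the Fourier inversion formula (\ref{fou1}) and the Fourier series (\ref{fou2}) into
\[
\tilde\pi_{\phi,\psi}(n)=\frac{1}{n}\sum_{f^nx=x}\phi_n\big(R^n(x)\big)\psi_n\big(\theta^n(x)\big)e^{\xi(R^n(x)-p_n)}.
\]
Applying (\ref{fou1}) with $x=R^n(x)$ and (\ref{fou2}) with $x=\theta^n(x)$, and interchanging the (absolutely convergent) sum over periodic points with the integral over $t$ and the sum over $k$, the periodic-point sum collapses to
\[
\sum_{f^nx=x} e^{(\xi+2\pi it)R^n(x)+2\pi ik\theta^n(x)} = Z_n(\xi+2\pi it,k).
\]
Thus
\[
\tilde\pi_{\phi,\psi}(n)=\frac{e^{-\xi p_n}}{n}\int_{-\infty}^{\infty}\sum_{k\in\mathbb Z} c_{n,k}\,\hat\phi_n(t)\,Z_n(\xi+2\pi it,k)\,dt.
\]
The interchange of summation and integration is legitimate because $\phi_n\in C^4$ compactly supported gives rapid decay of $\hat\phi_n$, $\psi_n\in C^4$ gives summable Fourier coefficients $(c_{n,k})$, and $Z_n(\xi+2\pi it,k)$ grows at most polynomially in $t$ and $k$ by the bounds in (\ref{zndecay}) together with the elementary bound on $Z_n(\xi,k)$; these estimates will need to be invoked to justify Fubini, and this is the one genuinely technical point, though routine.

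Next I would perform the change of variables $t\mapsto t/(2\pi\sigma_\alpha\sqrt n)$ in the integral, which turns $\hat\phi_n(t)$ into $\hat\phi_n\big(t/(2\pi\sigma_\alpha\sqrt n)\big)$, replaces $Z_n(\xi+2\pi it,k)$ by $Z_n\big(\xi+\tfrac{it}{\sigma_\alpha\sqrt n},k\big)$, and produces a Jacobian factor $1/(2\pi\sigma_\alpha\sqrt n)$. Multiplying through by the normalising constant $e^{\xi p_n}\sigma_\alpha\sqrt{2\pi n^3}/e^{H(\alpha)n}$, the factors of $\sigma_\alpha$, $\sqrt n$, and $\pi$ combine so that
\[
\frac{e^{\xi p_n}\sigma_\alpha\sqrt{2\pi n^3}}{e^{H(\alpha)n}}\,\tilde\pi_{\phi,\psi}(n)
=\frac{1}{\sqrt{2\pi}}\int_{-\infty}^{\infty}\sum_{k\in\mathbb Z}\frac{c_{n,k}}{e^{H(\alpha)n}}\,\hat\phi_n\!\left(\frac{t}{2\pi\sigma_\alpha\sqrt n}\right)Z_n\!\left(\xi+\frac{it}{\sigma_\alpha\sqrt n},k\right)dt.
\]

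Finally, to introduce the target quantity, I would use the Gaussian identity
\[
\int_{-\infty}^\infty e^{-t^2/2}\,dt=\sqrt{2\pi},
\]
so that
\[
\int_{\R}\phi_n\int_{\mathbb S^1}\psi_n=\frac{1}{\sqrt{2\pi}}\int_{-\infty}^\infty e^{-t^2/2}\left(\int_{\R}\phi_n\int_{\mathbb S^1}\psi_n\right)dt.
\]
Subtracting this from the previous display and applying the triangle inequality under the integral sign yields exactly
\[
A(n)\le\frac{1}{\sqrt{2\pi}}\int_{-\infty}^{\infty}\left|\sum_{k\in\mathbb Z}\frac{c_{n,k}}{e^{H(\alpha)n}}\,\hat\phi_n\!\left(\frac{t}{2\pi\sigma_\alpha\sqrt n}\right)Z_n\!\left(\xi+\frac{it}{\sigma_\alpha\sqrt n},k\right)-e^{-t^2/2}\int_{\R}\phi_n\int_{\mathbb S^1}\psi_n\right|dt,
\]
which is the claimed inequality. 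The main obstacle is not any single step but the bookkeeping: keeping track of the exponential normalisation $e^{H(\alpha)n}$, the algebraic factors from the change of variables, and ensuring the Fubini interchange is genuinely justified by the polynomial growth of $Z_n$ from (\ref{zndecay}) and the Schwartz-type decay of $\hat\phi_n$.
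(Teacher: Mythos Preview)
Your proposal is correct and follows essentially the same route as the paper: substitute the Fourier representations (\ref{fou1}) and (\ref{fou2}) into $\tilde\pi_{\phi,\psi}(n)$, collapse the periodic-point sum into $Z_n$, rescale via $t\mapsto t/(2\pi\sigma_\alpha\sqrt n)$, and then subtract the Gaussian identity $\int e^{-t^2/2}\,dt=\sqrt{2\pi}$ before applying the triangle inequality. The only addition in your write-up is the explicit remark about justifying the Fubini interchange via the decay of $\hat\phi_n$ and $c_{n,k}$, which the paper leaves implicit.
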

\begin{proof}
Using (\ref{fou1}) and (\ref{fou2}) we can get
\begin{align*}
        \frac{e^{\xi p_n}\sigma_\alpha\sqrt{2\pi n^3}}{ e^{H(\alpha)n}} \tilde\pi_{\phi,\psi}(n)=&\frac{\sigma_\alpha\sqrt{2\pi n}}{e^{H(\alpha)n}} \sum_{f^nx=x}  \int_{-\infty}^\infty \hat\phi_{n}(t)e^{(\xi+2\pi it)R^n(x)}\,dt\, \sum_{k\in\mathbb Z} c_{n,k}\, e^{2\pi ik\theta^n(x)}\\
    =& \frac{ 1}{\sqrt{2\pi}} \int_{-\infty}^{\infty}  \sum_{k\in\mathbb Z}  \frac{c_{n,k}}{ e^{H(\alpha)n}}    \hat\phi_{n}\left(\frac{t}{2\pi\sigma_\alpha\sqrt{n}}\right)\sum_{f^nx=x}e^{\left(\xi+\frac{it}{\sigma_\alpha\sqrt{n}}\right)R^n(x)+2\pi ik\theta^n(x)}\,dt\\
    =& \frac{ 1}{\sqrt{2\pi}} \int_{-\infty}^{\infty}  \sum_{k\in\mathbb Z}  \frac{c_{n,k}}{ e^{H(\alpha)n}} \, \hat\phi_{n}\left(\frac{t}{2\pi\sigma_\alpha\sqrt{n}}\right)Z_n\left(\xi+\frac{it}{\sigma_\alpha\sqrt{n}},k\right)\,dt.
\end{align*}
In addition, recalling that $\int_{-\infty}^\infty e^{-t^2/2}\, dt=\sqrt{2\pi}$ we get,
\begin{align*}
   \sqrt{2\pi}A(n)=&\left| \int_{-\infty}^{\infty} \sum_{k\in\mathbb Z}  \frac{c_{n,k}}{ e^{H(\alpha)n}} \, \hat\phi_{n}\left(\frac{t}{2\pi\sigma_\alpha\sqrt{n}}\right)Z_n\left(\xi+\frac{it}{\sigma_\alpha\sqrt{n}},k\right)- e^{-\frac{t^2}{2}}\int_{\R} \phi_{n}\int_{\mathbb{S}^1} \psi_n\,dt \right|\\
         \leq&  \int_{-\infty}^{\infty} \left | \sum_{k\in\mathbb Z} \frac{c_{n,k}}{ e^{H(\alpha)n}} \, \hat\phi_{n}\left(\frac{t}{2\pi\sigma_\alpha\sqrt{n}}\right)Z_n\left(\xi+\frac{it}{\sigma_\alpha\sqrt{n}},k\right)- e^{-\frac{t^2}{2}}\int_{\R} \phi_{n}\int_{\mathbb{S}^1} \psi_n\right| \,dt .
\end{align*}


\end{proof}

\noindent Consider now the following\begin{align*}
    &A_1(n):= \int_{-\e\sigma_\alpha\sqrt{n}}^{\e\sigma_\alpha\sqrt{n}} \left|  \sum_{k\in\mathbb Z} \frac{c_{n,k}}{ e^{H(\alpha)n}}  \,\hat\phi_{n}\left(\frac{t}{2\pi\sigma_\alpha\sqrt{n}}\right)Z_n\left(\xi+\frac{it}{\sigma_\alpha\sqrt{n}},k\right)-e^{-\frac{t^2}{2}}\int_{\R} \phi_{n}\int_{\mathbb{S}^1} \psi_n\right|\,dt,\\
     &A_2(n):= \int_{|t|\geq \e\sigma_\alpha\sqrt{n}}\left|  \sum_{k\in\mathbb Z} \frac{c_{n,k}}{ e^{H(\alpha)n}}  \,\hat\phi_{n}\left(\frac{t}{2\pi\sigma_\alpha\sqrt{n}}\right)Z_n\left(\xi+\frac{it}{\sigma_\alpha\sqrt{n}},k\right)\right|\,dt,\\
    &A_3(n):=  \int_{|t|\geq \e\sigma_\alpha\sqrt{n}}\left| e^{-\frac{t^2}{2}}\int_{\R} \phi_{n}\int_{\mathbb{S}^1} \psi_n \right| \,dt,
\end{align*}
with $\e>0$ small enough as in Lemmas \ref{pressure} and \ref{decay}. It then follows from Proposition \ref{a123} that 
\[A(n)\leq \frac{1}{\sqrt{2\pi} }\bigg[ A_1(n)+A_2(n)+A_3(n)\bigg].\]

\noindent We hence bound these three quantities separately to show that $\lim_{n\rightarrow\infty}A(n)=0$. To obtain these bounds we first recall a standard result from Fourier Analysis.

\begin{lemma}\label{CK}
If $\psi\in C^4(\mathbb S^1,\R)$ has Fourier coefficients $(c_k)_{k\in\mathbb Z}$ then $c_0=\int_{\mathbb S^1} \psi$ and uniformly for $\psi\in C^4(\mathbb S^1,\R)$ \[ c_k=O(\norm{\psi}_{C^4}|k|^{-4}).\] If $\phi\in C^4(\R,\R)$ is compactly supported and $\hat\phi$ is its Fourier transform then $\hat\phi(0)=\int_\R\phi$ and uniformly for $\phi\in C^4(\R,\R)$ we have that\[\hat \phi(u)=O(\norm{\phi}_{C^4}|u|^{-4}).\]
\end{lemma}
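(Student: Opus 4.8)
The plan is to prove both assertions by integrating by parts four times, using $1$-periodicity in the Fourier-series case and compact support in the Fourier-transform case to discard all boundary terms. The identities for $c_0$ and $\hat\phi(0)$ are immediate from the definitions: with the normalisation $c_k=\int_{-1/2}^{1/2}\psi(e^{2\pi it})e^{-2\pi ikt}\,dt$ one has $c_0=\int_{\mathbb S^1}\psi$, and with $\hat\phi(u)=\int_{-\infty}^\infty\phi(x)e^{-2\pi iux}\,dx$ one has $\hat\phi(0)=\int_\R\phi$.

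For the first part, I would fix $\psi\in C^4(\mathbb S^1,\R)$ and set $\Psi(t)=\psi(e^{2\pi it})$, a $1$-periodic $C^4$ function on $\R$ satisfying $\|\Psi^{(j)}\|_\infty\le(2\pi)^j\|\psi\|_{C^4}$ for $0\le j\le4$. For $k\ne0$ I integrate $c_k=\int_{-1/2}^{1/2}\Psi(t)e^{-2\pi ikt}\,dt$ by parts; since $\Psi$ and all of its derivatives are $1$-periodic, the boundary contributions at $t=\pm\tfrac12$ cancel, and each step contributes a factor $(2\pi ik)^{-1}$ and moves one derivative onto $\Psi$. After four steps,
\[
c_k=\frac{1}{(2\pi ik)^4}\int_{-1/2}^{1/2}\Psi^{(4)}(t)e^{-2\pi ikt}\,dt,
\]
whence $|c_k|\le(2\pi|k|)^{-4}\|\Psi^{(4)}\|_\infty\le\|\psi\|_{C^4}|k|^{-4}$, which is the desired estimate with an absolute implied constant, uniform over $\psi$.

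For the second part, I would fix $\phi\in C^4(\R,\R)$ with $\supp\phi$ compact and, for $u\ne0$, integrate $\hat\phi(u)=\int_{-\infty}^\infty\phi(x)e^{-2\pi iux}\,dx$ by parts four times. Since $\phi$ and its derivatives vanish off $\supp\phi$, every boundary term vanishes and one obtains $\hat\phi(u)=(2\pi iu)^{-4}\widehat{\phi^{(4)}}(u)$, so that
\[
|\hat\phi(u)|\le(2\pi|u|)^{-4}\|\phi^{(4)}\|_{L^1(\R)}\le(2\pi|u|)^{-4}\,\ell(\supp\phi)\,\|\phi\|_{C^4}.
\]
This gives $\hat\phi(u)=O(\|\phi\|_{C^4}|u|^{-4})$ with implied constant depending only on $\ell(\supp\phi)$; in the application all the functions $\phi_n$ have support in a fixed compact set (because $p_n\in K$ and $\ell_n$ is bounded), so the constant may be taken independent of $n$.

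There is no real obstacle here: the lemma is a routine application of integration by parts. The only point needing a little care is the uniformity — checking that the constant in the first part is genuinely absolute (passing from the circle derivative to the parameter derivative of $\Psi$ costs only powers of $2\pi$) and that in the second part it depends solely on the length of $\supp\phi$, which is harmless since these supports are uniformly bounded in the application.
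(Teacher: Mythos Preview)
Your proposal is correct and follows exactly the approach indicated in the paper, which simply states that ``these bounds follow by repeated applications of integration by parts.'' You have fleshed out the details appropriately, including the observation that in the Fourier-transform case the implied constant depends on $\ell(\supp\phi)$, which is indeed uniformly bounded in the application to the functions $\phi_n$.
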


These bounds follow by repeated applications of integration by parts. 
Now since $$\psi_n^{(q)}(x)=\left(\frac{\k}{\k_n}\right)^{q} \psi^{(q)}\left(\frac{\k}{\k_n}(x-\vartheta_n)\right)$$ there exists a constant $C>0$ such that for all $n,|k|\geq 1$  
\begin{align}\label{fouc}
    |c_{n,k}| \leq C\k_n^{-4}|k|^{-4} \|\psi\|_{C^4}.
\end{align}
Similarly, there exists $C>0$ such that for $n\in\N$ and $u\in \R$
\begin{align}\label{fouc2}
    |\hat \phi_n(u)|\leq C\ell_n^{-4}|u|^{-4}\|\phi\|_{C^4}.
\end{align}

\begin{proposition}
$\lim_{n\ra\infty} A_1(n)=0.$
\end{proposition}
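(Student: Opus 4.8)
The plan is to isolate the $k=0$ term of the inner sum, which carries the Gaussian, and to show the remaining terms are negligible using the Dolgopyat-type decay estimate~(\ref{zndecay}). Write $b = t/(\sigma_\alpha\sqrt n)$ and $u = t/(2\pi\sigma_\alpha\sqrt n)$, and recall that $P(\xi R) = H(\alpha)$. By the triangle inequality, $A_1(n) \le B_1(n) + B_2(n)$, where $B_1(n)$ is the integral over $\lvert t\rvert \le \e\sigma_\alpha\sqrt n$ of
\[
\left\lvert \frac{c_{n,0}}{e^{H(\alpha)n}}\,\hat\phi_n(u)\,Z_n(\xi+ib,0) - e^{-t^2/2}\int_{\R}\phi_n\int_{\mathbb{S}^1}\psi_n \right\rvert
\]
and $B_2(n)$ is the integral over the same range of $\bigl\lvert \sum_{k\neq0} c_{n,k}e^{-H(\alpha)n}\hat\phi_n(u)Z_n(\xi+ib,k)\bigr\rvert$.

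For $B_2(n)$: for $k\neq0$ (and $t\neq0$, which excludes only a null set) we have $\lvert b\rvert + \lvert k\rvert > 1$, so (\ref{zndecay}) gives $\lvert Z_n(\xi+ib,k)\rvert \le C(\lvert b\rvert+\lvert k\rvert)^{2+\e}(\rho_\e e^{P(\xi R)})^n$, and dividing by $e^{H(\alpha)n}$ leaves a factor $\rho_\e^n$. Using (\ref{fouc}) and $\lvert b\rvert \le \e$, the series $\sum_{k\neq0}\lvert c_{n,k}\rvert(\lvert b\rvert+\lvert k\rvert)^{2+\e}$ converges (taking $\e<1$) and is $O(\k_n^{-4})$. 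Since $\lvert\hat\phi_n(u)\rvert \le \int_{\R}\lvert\phi_n\rvert$ is bounded uniformly in $n$ (because $\phi$ is a fixed compactly supported function, $p_n\in K$, and $\ell_n$ is bounded by $\operatorname{diam}K$), the integrand of $B_2(n)$ is $O(\k_n^{-4}\rho_\e^n)$ uniformly in $t$; integrating over an interval of length $O(\sqrt n)$ gives $B_2(n) = O(\sqrt n\,\k_n^{-4}\rho_\e^n) \to 0$, since $(\k_n^{-1})$ has sub-exponential growth.

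For $B_1(n)$: Lemma~\ref{decay}(2) applies because $\lvert b\rvert\le\e$, giving $Z_n(\xi+ib,0) = e^{nP((\xi+ib)R)} + O(\beta^n e^{H(\alpha)n})$; after multiplying by the uniformly bounded factors $c_{n,0}$ and $\hat\phi_n(u)$ and integrating, this error contributes $O(\sqrt n\beta^n)\to0$. For the main term, Lemma~\ref{pressure} yields
\[
e^{n(P((\xi+ib)R)-P(\xi R))} = \Bigl(1 - \tfrac{t^2}{2n} + O\bigl(\tfrac{\lvert t\rvert^3}{n^{3/2}}\bigr)\Bigr)^{\!n},
\]
which converges to $e^{-t^2/2}$ for each fixed $t$; moreover, after shrinking $\e$ so that the cubic error is absorbed into the quadratic one, $\lvert e^{n(P((\xi+ib)R)-P(\xi R))}\rvert \le e^{-t^2/4}$ for all $\lvert t\rvert\le\e\sigma_\alpha\sqrt n$. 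Combining this with $c_{n,0} = \int_{\mathbb{S}^1}\psi_n$ (Lemma~\ref{CK}) and $\hat\phi_n(u) = \int_{\R}\phi_n + O(\lvert t\rvert/\sqrt n)$ (the latter from $\lvert e^{-2\pi iux}-1\rvert \le 2\pi\lvert u\rvert\lvert x\rvert$, using that $\phi_n$ has uniformly bounded support and $L^1$-norm), the integrand of $B_1(n)$ tends to $0$ pointwise in $t$ and is dominated by a fixed constant times $e^{-t^2/4}$, uniformly in $n$; here one uses that $\lvert c_{n,0}\rvert$, $\lvert\int_{\R}\phi_n\rvert$ and $\lvert\int_{\mathbb{S}^1}\psi_n\rvert$ are all bounded (again because $\ell_n$ is bounded and $\k_n \le \k < 1$). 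Dominated convergence then gives $B_1(n)\to0$, and hence $A_1(n)\to0$.

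The step I expect to be the main obstacle is marshalling the uniform domination needed for the dominated convergence theorem: one must verify that the $n$-dependent objects $\phi_n$, $\psi_n$ and their Fourier data remain uniformly controlled — which is precisely where the compactness of $K$ and the boundedness of the arc lengths enter — and one must choose $\e$ small enough that the $O(\lvert b\rvert^3)$ term in Lemma~\ref{pressure} is genuinely swallowed by the Gaussian, so that the bound $\lvert e^{n(P((\xi+ib)R)-P(\xi R))}\rvert \le e^{-t^2/4}$ holds over the entire range of integration rather than only for bounded $t$.
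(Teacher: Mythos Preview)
Your proof is correct and follows essentially the same approach as the paper's: both split the integrand into the $k=0$ term and the $k\neq 0$ remainder, dispatch the latter via the Dolgopyat estimate~(\ref{zndecay}) together with the Fourier-coefficient decay~(\ref{fouc}), and handle the former by combining Lemma~\ref{decay}(2) with Lemma~\ref{pressure} and the Dominated Convergence Theorem. Your write-up is in a few places more careful than the paper's sketch --- for instance, you track the $\sqrt{n}$ coming from the length of the integration interval, you give an explicit $O(|t|/\sqrt{n})$ bound for $\hat\phi_n(u)-\hat\phi_n(0)$ rather than appealing loosely to ``continuity'' of the $n$-dependent $\hat\phi_n$, and you note that $\e$ should be shrunk so that the cubic error in Lemma~\ref{pressure} is absorbed, making the domination bound $|e^{n(P((\xi+ib)R)-H(\alpha))}|\le e^{-t^2/4}$ valid over the whole integration range rather than only ``for large $n$'' --- but these are refinements of the same argument, not a different route.
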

\begin{proof}

We can use inequality (\ref{zndecay}) to bound $Z_n\left(\xi+\frac{it}{\sigma_\alpha\sqrt{n}},k\right)$ for $k\neq 0$. Therefore fixing $\eta>0$ and recalling the bounds for the Fourier coefficients from (\ref{fouc}) we get  
\begin{align}\label{error1}
    &\int_{-\e\sigma_\alpha\sqrt{n}}^{\e\sigma_\alpha\sqrt{n}} \left| \sum_{k\neq 0}\frac{c_{n,k}}{e^{H(\alpha)n}}  \hat\phi_{n}\left(\frac{t}{2\pi\sigma_\alpha\sqrt{n}}\right)Z_n\left(\xi+\frac{it}{\sigma_\alpha\sqrt{n}},k\right)  \right|\,dt \nonumber\\ \nonumber
    =&  O\left(\int_{-\e\sigma_\alpha\sqrt{n}}^{\e\sigma_\alpha\sqrt{n}}\sum_{k\neq 0}   \k_n^{-4}|k|^{-4} \norm{\hat\phi_n}_\infty \left(\frac{|t|}{\sigma_\alpha\sqrt{n}}+|k|\right)^{2+\eta}\rho_\eta^n \,dt \right)=O\left( \k_n^{-4}\norm{\hat\phi_n}_\infty\, \rho_\eta^n  \right)
\end{align}  
for some $\rho_\eta\in(0,1)$. Since $\phi$ is compactly supported and $p_n\in K$ we can uniformly bound $\hat\phi_n$ for all $n\in\N$. Further, recalling that the sequence $(\k_n)_{n=1}^{\infty}$ is of sub-exponential growth we get that this error tends to zero as $n\ra\infty.$
Therefore, we are now left to bound 
\[
 \int_{-\e\sigma_\alpha\sqrt{n}}^{\e\sigma_\alpha\sqrt{n}} \left| \frac{c_{n,0}}{ e^{H(\alpha)n}}  \hat\phi_{n}\left(\frac{t}{2\pi\sigma_\alpha\sqrt{n}}\right)Z_n\left(\xi+\frac{it}{\sigma_\alpha\sqrt{n}},0\right) - e^{-\frac{t^2}{2}}\int_{\R} \phi_{n}\int_{\mathbb{S}^1}\psi_n \right|\,dt.
\]
Using part (2) from Lemma \ref{decay} we get that for some $\beta\in (0,1)$
\begin{align*}
 \int_{\mathbb{S}^1} \psi_n \int_{-\e\sigma_\alpha\sqrt{n}}^{\e\sigma_\alpha\sqrt{n}} \left| \hat\phi_{n}\left(\frac{t}{2\pi\sigma_\alpha\sqrt{n}}\right) e^{n\left(P\left(\left(\xi+\frac{it}{\sigma_\alpha\sqrt{n}}\right)R\right)-H(\alpha)\right)} - e^{-\frac{t^2}{2}}\int_{\R} \phi_{n}\right|\,dt+O\left(\beta^n\right),
\end{align*}
 On the domain of integration, we see that as $n\rightarrow\infty$
\begin{enumerate}
    \item $e^{n\left(P\left(\left(\xi+\frac{it}{\sigma_\alpha\sqrt{n}}\right)R\right)-H(\alpha)\right)}\rightarrow e^{-t^2/2}$ by Lemma \ref{pressure},
    \item $\hat\phi_{n}\left(\frac{t}{2\pi\sigma_\alpha\sqrt{n}}\right)\ra \hat\phi_n(0)=\int_\R \phi_{n} $ by continuity.
\end{enumerate}
\noindent Furthermore, for large $n$ we have the bound $e^{n\left(P\left(\left(\xi+\frac{it}{\sigma_\alpha\sqrt{n}}\right)R\right)-H(\alpha)\right)}\leq e^{-t^2/4}$ and so \[\left|e^{n\left(P\left(\left(\xi+\frac{it}{\sigma_\alpha\sqrt{n}}\right)R\right)-H(\alpha)\right)}-e^{-t^2/2}\right|\leq 2e^{-t^2/4}.
\]
Finally, since $\hat\phi_n$ is uniformly bounded, we can apply the Dominated Convergence Theorem to get that $\lim_{n\rightarrow\infty}A_1(n)=0$.

\end{proof} 
\begin{proposition}
$\lim_{n\ra\infty}A_2(n)=0.$
\end{proposition}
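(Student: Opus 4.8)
The plan is to bound $A_2(n)$ by splitting the inner sum over $k$ into the term $k=0$ and the terms with $k\neq0$, estimating each using the decay bound (\ref{zndecay}) and Lemma \ref{decay} for $Z_n$, together with the Fourier bounds (\ref{fouc}) and (\ref{fouc2}). Writing $b=t/(\sigma_\alpha\sqrt n)$, the region of integration $\{|t|\geq\e\sigma_\alpha\sqrt n\}$ becomes $\{|b|\geq\e\}$, so $|b|$ is bounded away from $0$ throughout; recall also $H(\alpha)=P(\xi R)$, that $|c_{n,0}|=|\int_{\mathbb S^1}\psi_n|\leq\|\psi\|_\infty$, that $|c_{n,k}|=O(\k_n^{-4}|k|^{-4})$ for $k\neq0$ by (\ref{fouc}), and that $|\hat\phi_n(t/(2\pi\sigma_\alpha\sqrt n))|=O(\ell_n^{-4}n^2|t|^{-4})$ uniformly by (\ref{fouc2}).

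For the terms with $k\neq0$ one has $|b|+|k|\geq1+\e>1$, so (\ref{zndecay}) applies and, for a fixed $\eta\in(0,1)$ and some $\rho_\eta\in(0,1)$, gives $e^{-H(\alpha)n}|Z_n(\xi+ib,k)|\leq C(|b|+|k|)^{2+\eta}\rho_\eta^n\leq C'(|b|^{2+\eta}+|k|^{2+\eta})\rho_\eta^n$. I would then bound the $k\neq0$ part of $A_2(n)$ by the sum of absolute values, pull out $\sum_{k\neq0}|k|^{-4}<\infty$ and $\sum_{k\neq0}|k|^{-2+\eta}<\infty$, and be left with an estimate of the form
\[
\k_n^{-4}\ell_n^{-4}\rho_\eta^n\int_{|t|\geq\e\sigma_\alpha\sqrt n}n^2|t|^{-4}\big((|t|/\sqrt n)^{2+\eta}+1\big)\,dt,
\]
where the $t$-integral converges because of the factor $|t|^{-4}$ and contributes only a fixed power of $n$; since $(\k_n^{-1})$ and $(\ell_n^{-1})$ are sub-exponential and $\rho_\eta<1$, this tends to $0$.

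For $k=0$ I would split the region into $\{|b|>1\}$ and $\{\e\leq|b|\leq1\}$. On $\{|b|>1\}$ one has $|b|+|k|=|b|>1$, so (\ref{zndecay}) applies exactly as before and, after integrating the resulting $|t|^{-2+\eta}$ tail, the contribution is $O(\ell_n^{-4}n^{1/2}\rho_\eta^n)\to0$. On $\{\e\leq|b|\leq1\}$ one cannot use (\ref{zndecay}), so instead I would invoke Lemma \ref{decay}(1): by the non-lattice property of $R$ (Remark \ref{intro_non-lattice}) and compactness, $\op{spr}(\mathcal L_{(\xi+ib,0)})$ is uniformly bounded away from $e^{P(\xi R)}$ on $\{\e\leq|b|\leq1\}$, whence Proposition \ref{zn} yields $Z_n(\xi+ib,0)=O(\beta^ne^{H(\alpha)n})$ there for some $\beta\in(0,1)$; since $|\hat\phi_n(t/(2\pi\sigma_\alpha\sqrt n))|=O(\ell_n^{-4})$ on this subdomain (using $|t|\geq\e\sigma_\alpha\sqrt n$) and it has $t$-length $O(\sqrt n)$, its contribution is $O(\ell_n^{-4}\sqrt n\,\beta^n)\to0$. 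Adding the three pieces gives $A_2(n)\to0$.

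The calculations themselves are routine bookkeeping of powers of $n$, $\ell_n$ and $\k_n$ against the exponential factors $\rho_\eta^n$ and $\beta^n$; the two points I would take care over are the use of Fubini's theorem to interchange $\sum_k$ with $\int\,dt$ (justified by absolute convergence from the $|k|^{-4}$ and $|t|^{-4}$ decay) and checking that the fixed $\e$ in the definition of $A_2$ is compatible with applying Lemma \ref{decay}(1) on $\{\e\leq|b|\leq1\}$, which is the only place the precise shape of the estimates could bite.
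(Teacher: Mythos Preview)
Your proposal is correct and follows essentially the same route as the paper: split into $k\neq0$ and $k=0$, apply the Dolgopyat bound~(\ref{zndecay}) wherever $|b|+|k|>1$, and on the compact strip $\{\e\le|b|\le1\}$ with $k=0$ invoke Lemma~\ref{decay}(1) (the paper's text actually cites part~(2) there, which appears to be a slip). The powers of $n$, $\ell_n$, $\k_n$ in your bookkeeping differ slightly from the paper's cruder estimates, but both are polynomial and are killed by the exponential factors $\rho_\eta^n$ and $\beta^n$ using the sub-exponential growth hypothesis.
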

\begin{proof}
 \[A_2(n)\leq \sum_{k\in\mathbb Z}  \frac{|c_{n,k}|}{ e^{H(\alpha)n}}  \int_{|t|\geq \e\sigma_\alpha\sqrt{n}} \left| \hat\phi_{n}\left(\frac{t}{2\pi\sigma_\alpha\sqrt{n}}\right) Z_n\left(\xi+\frac{it}{\sigma_\alpha\sqrt{n}},k\right)\right|\,dt.
\]

\noindent Firstly, we use the bounds from (\ref{fouc}) and (\ref{fouc2}). In addition, for $k\neq 0$ we use inequality (\ref{zndecay}) to get that a fixed $\eta\in (0,1)$ there exists $\rho_{\eta}\in (0,1)$ such that
\begin{align*}
    &\sum_{k\neq0}  \frac{|c_{n,k}|}{ e^{H(\alpha)n}}  \int_{|t|\geq \e\sigma_\alpha\sqrt{n}} \left| \hat\phi_{n}\left(\frac{t}{2\pi\sigma_\alpha\sqrt{n}}\right) Z_n\left(\xi+\frac{it}{\sigma_\alpha\sqrt{n}},k\right)\right|\,dt\\
    =&O\left( \sum_{k\neq 0} \k_n^{-4}|k|^{-4} \int_{|t|\geq \e\sigma_\alpha\sqrt{n}} \left| \ell_n^{-4}\left(\frac{t}{2\pi\sigma_\alpha\sqrt{n}}\right)^{-4} \left(\left|\frac{t}{\sigma_\alpha\sqrt{n}}\right|+|k|\right)^{2+\eta} \rho_\eta^n\right|\,dt \right)\\
    =&O\left(\frac{n^2\rho_\eta^n}{\k_n^{4}\ell_n^{4}}\sum_{k\neq0}  \int_{|t|\geq \e\sigma_\alpha\sqrt{n}} \frac{\big(\left|t/\sigma_\alpha\sqrt{n}\right|+|k|\big)^{2+\eta}}{ t^{4}k^{4} } \,dt \right)=O\left(\frac{n^2}{\k_n^{4}\ell_n^{4}}\rho_\eta^n\right).
\end{align*}
\noindent On the other hand, for $k=0$ we get using part (2) of Lemma \ref{decay} that for some $\beta\in(0,1)$,
\[   \frac{|c_{n,0}|}{ e^{H(\alpha)n}}  \int_{|t|\geq \e\sigma_\alpha\sqrt{n}}^{|t|\leq \sigma_\alpha\sqrt{n}} \left| \hat\phi_{n}\left(\frac{t}{2\pi\sigma_\alpha\sqrt{n}}\right) Z_n\left(\xi+\frac{it}{\sigma_\alpha\sqrt{n}},0\right)\right|\,dt\\=O\left(|c_{n,0}|\norm{\hat\phi_{n} }_{\infty} \beta^{n}\right)=O(|c_{n,0}|\beta^{n}),
\]since $\hat\phi_n$ is uniformly bounded across all $n\in\N.$ We can also uniformly bound $|c_{n,0}|$ since \[c_{n,0}=\int_{\mathbb S^1 }\psi_n\leq\|\psi\|_\infty.\]

Finally, as above we can use inequality (\ref{zndecay}) to bound the rest by the following,
\begin{align*}
&\frac{|c_{n,0}|}{ e^{H(\alpha)n}}   \int_{|t|\geq \sigma_\alpha\sqrt{n}} \left| \hat\phi_{n}\left(\frac{t}{2\pi\sigma_\alpha\sqrt{n}}\right) Z_n\left(\xi+\frac{it}{\sigma_\alpha\sqrt{n}},0\right)\right|\,dt\\
=&O\left( \int_{|t|\geq \sigma_\alpha\sqrt{n}} \ell_n^{-4}\left(\frac{t}{2\pi\sigma_\alpha\sqrt{n}}\right)^{-4}  \left|\frac{t}{\sigma_\alpha\sqrt{n}}\right|^{2+\eta} \rho_\eta^n \,dt \right)\\
=&O\left(\frac{n^2\rho_\eta^n}{\ell_n^{4} }  \int_{|t|\geq \sigma_\alpha\sqrt{n}}   |t|^{\eta-2}  \,dt \right)=O\left(\frac{n^2}{\ell_n^{4} }\rho_\eta^n  \right).
\end{align*}

\noindent Combining the three bounds obtained above and recalling that the sequences $(\ell_{n})_{n=1}^{\infty}$ and $(\k_{n})_{n=1}^{\infty}$ are of sub-exponential growth we obtain that $\lim_{n\rightarrow\infty} A_{2}(n)=0$.

\end{proof}
Finally, it is clear that $\lim_{n\ra\infty}A_3(n)=0$. This completes the proof of Proposition \ref{aux}.

\subsection{Approximation argument}

Here we show how the previous auxiliary estimates provide us with the proof of Theorem \ref{main} through an approximation argument. By Proposition \ref{aux} and Lemma \ref{sumchange} we have that for all compactly supported $\phi\in C^4(\mathbb R,\R)$ and all $\psi\in C^4(\mathbb S^1,\R)$ \begin{align}\label{auxlim}
  \pi_{\phi,\psi}(n)\sim e^{-\xi p_n}\frac{\int \phi_n\,\int\psi_n}{\sigma_\alpha\sqrt{2\pi}}\frac{e^{H(\alpha)n}}{n^{3/2}}
\end{align}
as $n\ra\infty.$

Fixing $\eta>0$ we wish to construct compactly supported $\phi \in C^{4}(\R,\R)$ and $\psi \in C^{4}(\mathbb S^1,\R)$ satisfying the following:
\begin{align*}
&\mathbbm{1}_{\left[-\frac{1}{2},\frac{1}{2}\right]}\leq\phi \leq 1+\eta, \;\; \supp(\phi )\subset \left[-\frac{1+\eta}{2},\frac{1+\eta}{2}\right] \;\text{ and }\; \int_\R \phi \leq 1+\eta,\\
&\mathbbm{1}_{\left[-\frac{\k}{2},\frac{\k}{2}\right]}\leq\psi \leq 1+\eta, \;\; \supp(\psi )\subset \left[-\frac{\k+\eta}{2},\frac{\k+\eta}{2}\right] \;\text{ and }\; \int_{\mathbb S^1} \psi \leq \k+\eta.
\end{align*}
A smooth function $\Phi:\R\ra\R_{\geq0}$ is called a positive mollifier, if it satisfies the following properties:
\begin{enumerate}
    \item it is compactly supported,
    \item $\int_\R \Phi =1$,
    \item $\lim_{\e\ra0} \Phi_\e(x)=\lim_{\e\ra0} \e\inv\Phi(x/\e)=\delta(x)$ where $\delta(x)$ is the Dirac delta function.
\end{enumerate}

Let $\gamma_1,...,\gamma_4>0$ and set $G=(1+\gamma_1)\mathbbm{1}_{\left[-\frac{1}{2}-\gamma_2,\frac{1}{2}+\gamma_2\right]}$ and $H=(1+\gamma_3)\mathbbm1_{\left[-\frac{\k}{2}-\gamma_4,\frac{\k}{2}+\gamma_4\right]}$. Then for sufficiently small $\e,\gamma_1,...,\gamma_4>0$ the functions \[
\phi =G*\Phi_\e  \;\;\text{  and  }\;\;  \psi =H*\Phi_e
\]
satisfy all the required properties. Note that since $\k<1$ and the constants $\e,\,\gamma_4$ were chosen sufficiently small it is harmless to assume that $\psi$ is defined on $\R$ rather than $\mathbb S^1$. Using (\ref{auxlim}) and the properties above we can deduce that
\begin{align*}
\limsup_{n\rightarrow\infty}& \frac{\sigma_\alpha\sqrt{2\pi n^3}}{e^{H(\alpha)n}} \pi(n,\alpha,I_n,S_n)\\
=\limsup_{n\rightarrow\infty}& \frac{\sigma_\alpha\sqrt{2\pi n^3}}{e^{H(\alpha)n}} \sum_{\tau\in\P_n}\mathbbm{1}_{\left[-\frac{1}{2},\frac{1}{2}\right]}\left(\ell^{-1}_{n}(\log|\lambda(\tau)|-n\alpha-p_n)\right)\mathbbm{1}_{\left[-\frac{\k}{2},\frac{\k}{2}\right]}\left(\frac{\k}{\k_n}(\hat \lambda(\tau)-\vartheta_n)\right)\\
\leq\limsup_{n\rightarrow\infty} &\frac{\sigma_\alpha\sqrt{2\pi n^3}}{e^{H(\alpha)n}} \sum_{\tau\in\P_n}\phi\left(\ell^{-1}_{n}(\log|\lambda(\tau)|-n\alpha-p_n)\right)\psi\left(\frac{\k}{\k_n}(\hat \lambda(\tau)-\vartheta_n)\right)\\
=\limsup_{n\rightarrow\infty} & \,e^{-\xi p_n}\int_\R\phi_n \int_{\mathbb S^1}\psi_n.
\end{align*}
We have
\begin{align*}
    &\int_{\mathbb R}\psi_n=\int_\R \psi\left(\frac{\k}{\k_n}(y-\vartheta_n)\right)\,dy=\int_\R \psi\left(\frac{\k}{\k_n}y\right)\,dy=\frac{\k_n}{\k}\int_\R\psi\leq\k_n+\frac{\eta}{\kappa}=\nu(S_n)+O(\eta).
\end{align*}
Similarly,
\begin{align*}
&\int_\R \phi_n=\int_\R \phi(\ell_n\inv(x-p_n))e^{-\xi(x-p_n)}\,dx=\ell_n\int_\R \phi(u)e^{-\xi\ell_n u}\,du\\
=\ell_n&\int_{[-\frac{1+\eta}{2},\frac{1+\eta}{2}]} \phi(u)e^{-\xi\ell_n u}\,du\leq \ell_n\int_{[-\frac{1}{2},\frac{1}{2}]} \phi(u)e^{-\xi\ell_n u}\,du+\eta(1+\eta) e^{2(1+|\xi|)|K|}|K|\\\\
&\ell_n\int_{[-\frac{1}{2},\frac{1}{2}]} \phi(u)e^{-\xi\ell_n u}\,du \leq\ell_n\int_{[-\frac{1}{2},\frac{1}{2}]} (1+\eta)e^{-\xi\ell_n u}\,du\leq e^{\xi p_n}\int_{I_n} e^{-\xi u}\,du+\eta e^{(1+|\xi|)|K|}|K|
\end{align*}
Therefore, $$e^{-\xi p_n}\int_\R \phi_n\int_{\mathbb S^1}\psi_n\leq\nu(S_n)\int_{I_n} e^{-\xi u}\,du+O(\eta).$$

\noindent Similarly, one can show that
\begin{align*}
    \liminf_{n\rightarrow\infty}& \frac{\sigma_\alpha\sqrt{2\pi n^3}}{e^{H(\alpha)n}} \pi(n,\alpha,I_n,S_n)\geq \liminf_{n\rightarrow\infty}  \left(\nu(S_n)\int_{I_n} e^{-\xi x}\,dx\right)+O(\eta).
\end{align*}
Since the choice of $\eta>0 $ was arbitrary we get the result.


Assuming $\lim_{n\ra\infty} \ell_n=0$ the derivation of the asymptotic formula (\ref{main2}) from (\ref{main1}) is straightforward. The asymptotic formula (\ref{main3}) corresponding to choosing the measure of maximal entropy follows in a similar manner. By the definition of the pressure function $\mu_{\max}$ is the equilibrium state of $\xi R$ for $\xi=0$. Then, the proof follows in the same way as above.


\end{document}